\DeclareSymbolFont{bbold}{U}{bbold}{m}{n}
\DeclareSymbolFontAlphabet{\mathbbm}{bbold}
\DeclareMathSymbol{\rest}{\mathbin}{AMSa}{"16}
\newcommand{\B}{\mathbb{B}}
\newcommand{\C}{\mathbb{C}}
\newcommand{\M}{\mathcal{M}}
\newcommand{\N}{\mathcal{N}}
\newcommand{\nwd}{\mathrm{nwd}}
\newcommand{\bbot}{\mathbbm{0}}
\newcommand{\btop}{\mathbbm{1}}
\newcommand{\bb}{\mathfrak{b}}
\newcommand{\dd}{\mathfrak{d}}
\newcommand{\rr}{\mathfrak{r}}
\newcommand{\sss}{\mathfrak{s}}
\newcommand{\uu}{\mathfrak{u}}
\DeclareMathOperator{\add}{add}
\DeclareMathOperator{\cof}{cof}
\DeclareMathOperator{\non}{non}
\DeclareMathOperator{\Part}{Part}
\DeclareMathOperator{\Partb}{\mathbf{Part}}
\DeclareMathOperator{\Dense}{Dense}
\DeclareMathOperator{\suc}{Seq}
\DeclarePairedDelimiter{\abs}{\lvert}{\rvert}
\DeclarePairedDelimiterX{\Set}[2]{\{}{\}}{ #1 \mathchoice{\:}{\:}{\,}{\,}\delimsize\vert\allowbreak\mathchoice{\:}{\:}{\,}{\,}\mathopen{} #2 }
\DeclarePairedDelimiterX{\Seq}[2]{\langle}{\rangle}{ #1 \mathchoice{\:}{\:}{\,}{\,}\delimsize\vert\allowbreak\mathchoice{\:}{\:}{\,}{\,}\mathopen{} #2 }
\DeclarePairedDelimiterXPP{\eq}[2]{}{[}{]}{_{#2}}{#1}
\theoremstyle{plain}
\newtheorem{theorem}{Theorem}[section]
\newtheorem{proposition}[theorem]{Proposition}
\newtheorem{lemma}[theorem]{Lemma}
\newtheorem{corollary}[theorem]{Corollary}
\newtheorem{claim}{Claim}
\theoremstyle{definition}
\newtheorem{definition}[theorem]{Definition}
\newtheorem{question}[theorem]{Question}
\theoremstyle{remark}
\newtheorem{remark}[theorem]{Remark}
\begin{document}
\title{Almost refinement, reaping, and ultrafilter numbers}
\author{J{\"o}rg Brendle}
\address{Graduate School of System Informatics\\Kobe University\\1-1 Rokkodai-cho Nada-ku\\Kobe 657-8501\\Japan}
\author{Michael Hru{\v s}{\'a}k}
\address{Centro de Ciencias Matem{\'a}ticas\\Universidad Nacional Aut{\'o}noma de M{\'e}xico\\Morelia 58089\\Mexico}
\author{Francesco Parente}
\address{Graduate School of System Informatics\\Kobe University\\1-1 Rokkodai-cho Nada-ku\\Kobe 657-8501\\Japan}
\thanks{The second author gratefully acknowledges support received from the PAPIIT-UNAM grant IN 101323, and the hospitality of the Kobe University. The third author is an International Research Fellow of the Japan Society for the Promotion of Science}
\begin{abstract} We investigate the combinatorial structure of the set of maximal antichains in a Boolean algebra ordered by almost refinement. We also consider the reaping relation and its associated cardinal invariants, focusing in particular on reduced powers of Boolean algebras. As an application, we obtain that, on the one hand, the ultrafilter number of the Cohen algebra is greater than or equal to the cofinality of the meagre ideal and, on the other hand, a suitable parametrized diamond principle implies that the ultrafilter number of the Cohen algebra is equal to $\aleph_1$.
\end{abstract}

\maketitle

\section{Introduction}

Let $A$ and $B$ be partitions of $\omega$; let us say that $B$ \emph{almost refines} $A$, in symbols $A\le^* B$, if all but finitely many blocks of $A$ are a union of blocks of $B$. The relation $\le^*$ was first considered by Matet~\cite{MR0830067} and further investigated in the literature \cite{MR1771096,MR1489310,MR1601919,MR1755813}, where several ``dual'' cardinal invariants have been introduced and studied.

In the first part of the paper, we aim to generalize this analysis from partitions of $\omega$ to maximal antichains in c.c.c.\ Boolean algebras. More precisely, we equip the set $\Part(\B)$ of all maximal antichains in a c.c.c.\ Boolean algebra $\B$ with the relation $\le^*$, defined analogously as for partitions of $\omega$. We then use generalized Galois-Tukey connections to study the relational system $\Partb^*(\B)=\langle\Part(\B),\le^*,\Part(\B)\rangle$ and its associated cardinal invariants.

After some preliminaries on relational systems, which are collected in Section~\ref{section:due}, we prove some basic results about $\Partb^*(\B)$ in Section~\ref{section:tre}. Among other things, we show that, if $\B$ is a non-atomic $\sigma$-finite c.c.\ Boolean algebra, then there exists a generalized Galois-Tukey connection from the dominating relation $\langle\prescript{\omega}{}{\omega},\le^*,\prescript{\omega}{}{\omega}\rangle$ to $\Partb^*(\B)$. Subsequently, we focus on the Cohen algebra $\C_\omega$ and prove that $\Partb^*(\C_\omega)$ is Galois-Tukey equivalent to the nowhere dense ideal.

In the second part of this paper, we introduce the reaping relation of a Boolean algebra $\B$, whose associated cardinal invariants are the well-known reaping and splitting numbers of $\B$. In Section~\ref{section:quattro}, in particular, we show that the reaping relation of the reduced power of a Boolean algebra $\B$ is related both to $\Partb^*(\B)$ and to the reaping relation of $\mathcal{P}(\omega)/\mathrm{fin}$. Furthermore, we completely determine the reaping and splitting numbers of the reduced power of the Cohen algebra.

Finally, in Section~\ref{section:cinque} we apply the results established in the previous section to derive consequences for the ultrafilter number of Boolean algebras. As a result, we obtain that $\cof(\M)\le\uu(\C_\omega)$; this stands in parallel with the work of Burke~\cite{MR0961402} which, for the random algebra $\B_\omega$, gives the lower bound $\cof(\N)\le\uu(\B_\omega)$. In addition to that, we extend the technique of parametrized diamond principles, due to Moore, Hru{\v s}{\'a}k, and D{\v z}amonja~\cite{MR2048518}, to a class of Boolean algebras which we call ``Borel-homogeneous''. This yields that, if the parametrized diamond principle related to the reaping relation of the reduced power of $\C_\omega$ holds, then $\uu(\C_\omega)=\aleph_1$.

\section{Relational systems and generalized Galois-Tukey connections}\label{section:due}

In this section, we recall the basic notions and results regarding generalized Galois-Tukey connections, first introduced by Vojt{\'a}{\v s}~\cite{MR1234291} and later systematized by Blass~\cite{MR2768685}. In what follows, triples $\mathbf{A}=\langle A_-,A,A_+\rangle$, with $A\subseteq A_-\times A_+$, are called \emph{relational systems}.

\begin{definition} Let $\mathbf{A}$ be a relational system. If for every $x\in A_-$ there exists $y\in A_+$ such that $\langle x,y\rangle\in A$, then we define the \emph{dominating number} of $\mathbf{A}$ as
\[
\dd(\mathbf{A})=\min\Set{\abs{Y}}{Y\subseteq A_+\text{ and }(\forall x\in A_-)(\exists y\in Y)(\langle x,y\rangle\in A)}.
\]

If for every $y\in A_+$ there exists $x\in A_-$ such that $\langle x,y\rangle\notin A$, then we define the \emph{bounding number} of $\mathbf{A}$ as
\[
\bb(\mathbf{A})=\min\Set{\abs{X}}{X\subseteq A_-\text{ and }(\forall y\in A_+)(\exists x\in X)(\langle x,y\rangle\notin A)}.
\]
\end{definition}

\begin{definition} A \emph{generalized Galois-Tukey connection} from a relational system $\mathbf{A}$ to a relational system $\mathbf{B}$ consists of two functions $\varphi_-\colon A_-\to B_-$ and $\varphi_+\colon B_+\to A_+$ such that for all $a\in A_-$ and all $b\in B_+$
\[
\langle\varphi_-(a),b\rangle\in B\implies\langle a,\varphi_+(b)\rangle\in A.
\]
If there exists a generalized Galois-Tukey connection from $\mathbf{A}$ to $\mathbf{B}$, then we shall write $\mathbf{A}\le_\mathrm{T}\mathbf{B}$. Moreover, $\mathbf{A}$ and $\mathbf{B}$ are \emph{Galois-Tukey equivalent}, in symbols $\mathbf{A}\equiv_\mathrm{T}\mathbf{B}$, if $\mathbf{A}\le_\mathrm{T}\mathbf{B}$ and $\mathbf{B}\le_\mathrm{T}\mathbf{A}$.
\end{definition}

Due to the following observation, generalized Galois-Tukey connections yield inequalities between the corresponding dominating and bounding numbers.

\begin{remark}[{Vojt{\'a}{\v s}~\cite[Observation 3.1.2]{MR1234291}}]\label{remark:vojtas} If $\mathbf{A}\le_\mathrm{T}\mathbf{B}$, then $\dd(\mathbf{A})\le\dd(\mathbf{B})$ and $\bb(\mathbf{B})\le\bb(\mathbf{A})$.
\end{remark}

Given a relational system $\mathbf{A}$, we denote
\[
\mathbf{A}^\sigma=\langle A_-,A^\sigma,\prescript{\omega}{}{A_+}\rangle,
\]
where $\langle a,f\rangle\in A^\sigma$ if and only if there exists $n<\omega$ such that $\langle a,f(n)\rangle\in A$. The following facts are straightforward.

\begin{proposition}\label{proposition:sigma} Let $\mathbf{A}$ and $\mathbf{B}$ be relational systems, then:
\begin{itemize}
\item $\mathbf{A}^\sigma\le_\mathrm{T}\mathbf{A}$;
\item $\dd(\mathbf{A})\le\dd(\mathbf{A}^\sigma)+\aleph_0$;
\item if $\mathbf{A}\le_\mathrm{T}\mathbf{B}$ then $\mathbf{A}^\sigma\le_\mathrm{T}\mathbf{B}^\sigma$.
\end{itemize}
\end{proposition}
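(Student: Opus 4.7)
The plan is to verify each of the three bullet points by giving explicit Galois-Tukey connections or explicit constructions of dominating families. None of the points requires any deep idea; the only small care needed is with the second item, where the ``$+\aleph_0$'' appears because one single $\sigma$-dominating function already contributes countably many potential dominators.

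For the first bullet, I would exhibit the connection $\mathbf{A}^\sigma\le_\mathrm{T}\mathbf{A}$ by taking $\varphi_-\colon A_-\to A_-$ to be the identity and $\varphi_+\colon A_+\to\prescript{\omega}{}{A_+}$ to send $b$ to the constant sequence with value $b$. Then $\langle a,b\rangle\in A$ immediately gives $\langle a,\varphi_+(b)(0)\rangle\in A$, and hence $\langle a,\varphi_+(b)\rangle\in A^\sigma$.

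For the second bullet, I would argue directly: given any family $F\subseteq\prescript{\omega}{}{A_+}$ that dominates in $\mathbf{A}^\sigma$, the set $Y=\Set{f(n)}{f\in F,\,n<\omega}$ dominates in $\mathbf{A}$ and has cardinality at most $\abs{F}\cdot\aleph_0=\abs{F}+\aleph_0$. Taking $F$ of minimum size yields $\dd(\mathbf{A})\le\dd(\mathbf{A}^\sigma)+\aleph_0$. (Observe that if $\dd(\mathbf{A}^\sigma)$ is undefined then so is $\dd(\mathbf{A})$, since any $a\in A_-$ with no $A$-partner admits no $A^\sigma$-partner either, so the inequality is vacuous in that case.)

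For the third bullet, I would lift a given connection $\langle\varphi_-,\varphi_+\rangle$ from $\mathbf{A}$ to $\mathbf{B}$ pointwise: set $\psi_-=\varphi_-$ and, for $g\in\prescript{\omega}{}{B_+}$, define $\psi_+(g)=\varphi_+\circ g\in\prescript{\omega}{}{A_+}$. If $\langle\psi_-(a),g\rangle\in B^\sigma$, pick $n<\omega$ with $\langle\varphi_-(a),g(n)\rangle\in B$; the connection for $\mathbf{A},\mathbf{B}$ gives $\langle a,\varphi_+(g(n))\rangle=\langle a,\psi_+(g)(n)\rangle\in A$, whence $\langle a,\psi_+(g)\rangle\in A^\sigma$, as required. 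There is no real obstacle in the argument; the only thing to watch is the bookkeeping with the existential quantifier defining $A^\sigma$, which composes cleanly in all three cases.
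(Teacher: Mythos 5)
Your proof is correct and gives exactly the standard argument that the paper leaves implicit (it only remarks that these facts are ``straightforward''): constant sequences for the first item, flattening a $\sigma$-dominating family for the second, and postcomposition with $\varphi_+$ for the third. The only cosmetic quibble is in your parenthetical on definedness, where the justification you state (``no $A$-partner implies no $A^\sigma$-partner'') is the converse of the implication you actually need (``no $A^\sigma$-partner implies no $A$-partner''); both hold trivially, so nothing is at stake.
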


We introduce an operation between relational systems which will be useful in Section~\ref{section:quattro}.

\begin{definition}[Blass~\cite{MR1356008}] Let $\mathbf{A}$ and $\mathbf{B}$ be relational systems; the \emph{sequential composition} of $\mathbf{A}$ and $\mathbf{B}$ is defined as
\[
\mathbf{A}\mathbin{;}\mathbf{B}=\bigl\langle A_-\times\prescript{A_+}{}{B_-},S,A_+\times B_+\bigr\rangle
\]
where
\[
\bigl\langle\langle x,f\rangle,\langle a,b\rangle\bigr\rangle\in S\iff\langle x,a\rangle\in A\text{ and }\langle f(a),b\rangle\in B.
\]
\end{definition}

By looking at the left and right projections of the Cartesian product, it is easy to verify that $\mathbf{A}\le_\mathrm{T}\mathbf{A}\mathbin{;}\mathbf{B}$ and $\mathbf{B}\le_\mathrm{T}\mathbf{A}\mathbin{;}\mathbf{B}$, respectively.

\begin{proposition}[{Blass~\cite[Proposition 2]{MR1367133}}] If $\mathbf{A}$ and $\mathbf{B}$ are relational systems, then $\dd(\mathbf{A}\mathbin{;}\mathbf{B})=\dd(\mathbf{A})\cdot\dd(\mathbf{B})$ and $\bb(\mathbf{A}\mathbin{;}\mathbf{B})=\min\{\bb(\mathbf{A}),\bb(\mathbf{B})\}$.
\end{proposition}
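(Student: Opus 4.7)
The plan is to handle the two equalities separately, and within each, to prove the two inequalities independently.

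For the dominating number, the inequality $\dd(\mathbf{A}\mathbin{;}\mathbf{B})\le\dd(\mathbf{A})\cdot\dd(\mathbf{B})$ is the constructive half: I would fix a dominating set $Y_A\subseteq A_+$ of size $\dd(\mathbf{A})$ and a dominating set $Y_B\subseteq B_+$ of size $\dd(\mathbf{B})$ and verify that $Y_A\times Y_B$ dominates $\mathbf{A}\mathbin{;}\mathbf{B}$: given $\langle x,f\rangle$, first pick $a\in Y_A$ with $\langle x,a\rangle\in A$, then pick $b\in Y_B$ with $\langle f(a),b\rangle\in B$. For the reverse inequality, given a dominating $Y\subseteq A_+\times B_+$ for $\mathbf{A}\mathbin{;}\mathbf{B}$, I would project to obtain $Y_A\subseteq A_+$ and $Y_B\subseteq B_+$; testing $Y$ against pairs of the form $\langle x, f\rangle$ with $f$ arbitrary shows $Y_A$ dominates $\mathbf{A}$, while testing against pairs $\langle x_0, c_y\rangle$ with $c_y$ constantly equal to a chosen $y\in B_-$ shows $Y_B$ dominates $\mathbf{B}$. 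Hence $|Y|\ge\max\{\dd(\mathbf{A}),\dd(\mathbf{B})\}$, which equals $\dd(\mathbf{A})\cdot\dd(\mathbf{B})$ under cardinal multiplication.

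For the bounding number, one direction, $\bb(\mathbf{A}\mathbin{;}\mathbf{B})\le\min\{\bb(\mathbf{A}),\bb(\mathbf{B})\}$, is immediate from the remark that $\mathbf{A}\le_\mathrm{T}\mathbf{A}\mathbin{;}\mathbf{B}$ and $\mathbf{B}\le_\mathrm{T}\mathbf{A}\mathbin{;}\mathbf{B}$, together with Remark~\ref{remark:vojtas}. For the reverse inequality, I would argue by contradiction: suppose $X\subseteq A_-\times\prescript{A_+}{}{B_-}$ is unbounded with $|X|<\min\{\bb(\mathbf{A}),\bb(\mathbf{B})\}$. Then the projection $\{x:\langle x,f\rangle\in X\text{ for some }f\}$ has size less than $\bb(\mathbf{A})$, so some $a_0\in A_+$ dominates it in $\mathbf{A}$. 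Evaluating each $f$ appearing in $X$ at $a_0$ yields a subset of $B_-$ of size at most $|X|<\bb(\mathbf{B})$, so some $b_0\in B_+$ dominates it in $\mathbf{B}$. Then $\langle a_0,b_0\rangle$ witnesses that $X$ is in fact bounded, a contradiction.

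The only mildly delicate point is the lower bound for $\bb$: one must be careful to fix a \emph{single} pair $\langle a_0,b_0\rangle$ that bounds \emph{every} element of $X$, which is why $a_0$ is chosen first and the values $\{f(a_0):\langle x,f\rangle\in X\}$ are collected afterwards. The rest is routine verification using the definition of the sequential composition.
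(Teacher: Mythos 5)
The paper states this proposition purely as a citation to Blass and supplies no proof of its own, so there is nothing internal to compare against; judged on its own terms, your argument is the standard one and is essentially correct. The bounding-number half is complete: the upper bound does follow from the two projection connections $\mathbf{A}\le_\mathrm{T}\mathbf{A}\mathbin{;}\mathbf{B}$ and $\mathbf{B}\le_\mathrm{T}\mathbf{A}\mathbin{;}\mathbf{B}$ noted after the definition of sequential composition together with Remark~\ref{remark:vojtas}, and your lower bound handles the quantifier order correctly (fix $a_0$ first, then bound the collected values $f(a_0)$), which is indeed the only delicate point there.

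The one inaccuracy is in the lower bound for the dominating number. Your projection argument yields only $\dd(\mathbf{A}\mathbin{;}\mathbf{B})\ge\max\{\dd(\mathbf{A}),\dd(\mathbf{B})\}$, and the closing step ``the maximum equals $\dd(\mathbf{A})\cdot\dd(\mathbf{B})$ under cardinal multiplication'' is valid only when at least one of the two dominating numbers is infinite (and neither is zero). For finite values the product can genuinely exceed the maximum, and it is the product that is correct: for $\mathbf{A}=\mathbf{B}=\langle 2,=,2\rangle$ one checks $\dd(\mathbf{A}\mathbin{;}\mathbf{B})=4$, since a dominating set must contain $\langle x,f(x)\rangle$ for every $x$ and every $f$. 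To obtain the statement in full generality, refine the projection: given a dominating $Y\subseteq A_+\times B_+$, let $Z$ be the set of $a\in A_+$ whose fibre $Y^a=\Set{b}{\langle a,b\rangle\in Y}$ has size at least $\dd(\mathbf{B})$. If $Z$ failed to dominate some $x\in A_-$, then choosing for each $a\notin Z$ some $y_a\in B_-$ not dominated by $Y^a$ and setting $f(a)=y_a$ would make $\langle x,f\rangle$ undominated by $Y$; hence $\abs{Z}\ge\dd(\mathbf{A})$ and $\abs{Y}\ge\abs{Z}\cdot\dd(\mathbf{B})\ge\dd(\mathbf{A})\cdot\dd(\mathbf{B})$. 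For the infinite cardinals to which this paper actually applies the proposition (Theorem~\ref{theorem:reap} and Corollary~\ref{corollary:reap}), your version already suffices.
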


Lastly, let us recall the parametrized diamond principles of Moore, Hru{\v s}{\'a}k, and D{\v z}amonja~\cite{MR2048518}, which will play a role in Section~\ref{section:cinque}. For the purpose of the next definition, we stipulate that a relational system $\mathbf{A}$ is \emph{Borel} if
\begin{itemize}
\item $A_-$, $A$, and $A_+$ are Borel subsets of some Polish space;
\item for every $x\in A_-$ there exists $y\in A_+$ such that $\langle x,y\rangle\in A$;
\item for every $y\in A_+$ there exists $x\in A_-$ such that $\langle x,y\rangle\notin A$.
\end{itemize}

\begin{definition}[{Moore, Hru{\v s}{\'a}k, and D{\v z}amonja~\cite[Definition 4.4]{MR2048518}}]\label{def:diam} If $\mathbf{A}$ is a Borel relational system, then $\diamondsuit(\mathbf{A})$ is the following statement: for every Borel function $F\colon\prescript{<\omega_1}{}{2}\to A_-$ there exists $g\colon\omega_1\to A_+$ such that for every $f\colon\omega_1\to 2$ the set $\Set{\alpha<\omega_1}{\langle F(f\rest\alpha),g(\alpha)\rangle\in A}$ is stationary.
\end{definition}

Here, a function $F\colon\prescript{<\omega_1}{}{2}\to A_-$ is ``Borel'' if and only if for every $\delta<\omega_1$ the restriction $F\rest\prescript{\delta}{}{2}$ is Borel.

\section{Almost refinement}\label{section:tre}

This section is dedicated to the study of the almost refinement relation. For a Boolean algebra $\B$, let $\B^+=\B\setminus\{\bbot\}$. Let $\Part(\B)$ denote the set of all \emph{maximal antichains} in $\B$, i.e.\ maximal sets of pairwise disjoint elements of $\B^+$.

\begin{definition} Let $\B$ be a Boolean algebra. Given $A,B\in\Part(\B)$, we say that $B$ \emph{refines} $A$, in symbols $A\le B$, if for all $b\in B$ there exists $a\in A$ such that $b\le a$. The corresponding relational system is defined as
\[
\Partb(\B)=\langle\Part(\B),\le,\Part(\B)\rangle.
\]
\end{definition}

Inspired by Matet's relation \cite[Section 4]{MR0830067} on partitions of $\omega$, we introduce the relation of almost refinement on Boolean algebras satisfying the countable chain condition (c.c.c.). First we need some notation: for a maximal antichain $A$ and a finite subset $F\subseteq A$, let $A^F=(A\setminus F)\cup\{\bigvee F\}$, which is also a maximal antichain.

\begin{definition} Let $\B$ be a c.c.c.\ Boolean algebra. Given $A,B\in\Part(\B)$, we say that $B$ \emph{almost refines} $A$, in symbols $A\le^* B$, if there exists a finite subset $F\subseteq A$ such that $A^F\le B$. The corresponding relational system is defined as
\[
\Partb^*(\B)=\langle\Part(\B),\le^*,\Part(\B)\rangle.
\]
\end{definition}

\begin{remark} $\Part(\B)$ equipped with the relation $\le^*$ defined above is a directed set.
\end{remark}

For each $a\in\B^+$, let $\B\rest a=\Set{b\in\B}{b\le a}$ be the \emph{relative algebra} of $\B$ with respect to $a$.

\begin{lemma}\label{lemma:rel} If $\B$ is a c.c.c.\ Boolean algebra and $a\in\B^+$, then $\Partb^*(\B\rest a)\le_\mathrm{T}\Partb^*(\B)$.
\end{lemma}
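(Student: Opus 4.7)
The plan is to exhibit an explicit generalized Galois-Tukey connection. Assuming $a\neq\btop$ (the case $a=\btop$ is trivial, with identity maps), I would set
\[
\varphi_-(A)=A\cup\{\neg a\}\quad\text{and}\quad\varphi_+(B)=\Set{b\wedge a}{b\in B\text{ and }b\wedge a\neq\bbot}.
\]
The first map turns a maximal antichain of $\B\rest a$ into a maximal antichain of $\B$ by adjoining $\neg a$; the second projects a maximal antichain of $\B$ down to $\B\rest a$, with maximality in $\B\rest a$ following at once from the maximality of $B$ in $\B$.

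The content of the argument is the implication $\varphi_-(A)\le^* B\Rightarrow A\le^*\varphi_+(B)$. Suppose $F\subseteq A\cup\{\neg a\}$ is finite with $(A\cup\{\neg a\})^F\le B$, and set $F'=F\setminus\{\neg a\}$, a finite subset of $A$. I would show that $A^{F'}\le\varphi_+(B)$, which already yields $A\le^*\varphi_+(B)$. So fix $c=b\wedge a\in\varphi_+(B)$ with $b\in B$ and $b\wedge a\neq\bbot$, and pick $x\in(A\cup\{\neg a\})^F$ with $b\le x$. The possibility $x=\neg a$ is ruled out by $b\wedge a\neq\bbot$. If $x\in A\setminus F'$, then $c=b\wedge a\le x\wedge a=x$ since $x\le a$. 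If instead $x=\bigvee F$, then using $\neg a\wedge a=\bbot$ together with the fact that every element of $F'$ lies below $a$, one computes $x\wedge a=\bigvee F'$, whence $c\le\bigvee F'\in A^{F'}$. Either way, $c$ is dominated by an element of $A^{F'}$.

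The main obstacle, such as it is, is purely bookkeeping: one must track whether $\neg a$ belongs to $F$ and dispatch the degenerate case $F=\{\neg a\}$, where $x=\bigvee F=\neg a$ would force $c\le\bbot$ and so cannot occur. Once one notices that meeting with $a$ sends $\neg a$ to $\bbot$ and fixes everything already below $a$, the choice $F\mapsto F'$ is forced and the remaining verification is immediate.
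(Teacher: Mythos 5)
Your proof is correct and takes essentially the same approach as the paper: the paper's $\varphi_-$ sends $A$ to an arbitrary maximal antichain of $\B$ extending $A$ (of which your $A\cup\{\neg a\}$ is the canonical instance) and its $\varphi_+$ is the same meet-with-$a$ projection, with the same passage from $F$ to $F\cap A=F'$. Your write-up merely makes explicit the case analysis that the paper leaves implicit.
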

\begin{proof} Let $\varphi_-\colon\Part(\B\rest a)\to\Part(\B)$ be a function such that, if $A\in\Part(\B\rest a)$, then $\varphi_-(A)$ is a maximal antichain in $\B$ including $A$. Let $\varphi_+\colon\Part(\B)\to\Part(\B\rest a)$ be defined as follows: if $B\in\Part(\B)$ then $\varphi_+(B)=\Set{a\wedge b}{b\in B}\setminus\{\bbot\}$.

To see that $\varphi_-$ and $\varphi_+$ give a generalized Galois-Tukey connection, suppose $\varphi_-(A)\le^* B$; then there exists a finite subset $F\subseteq\varphi_-(A)$ such that $\varphi_-(A)^F\le B$. It follows that $A^{F\cap A}\le\varphi_+(B)$ and therefore $A\le^*\varphi_+(B)$.
\end{proof}

Next, we would like to determine when the dominating and bounding numbers of $\Partb^*(\B)$ are uncountable. This problem is related to the weak distributivity of~$\B$.

\begin{definition}[von Neumann~\cite{MR0120174}] A Boolean algebra $\B$ is \emph{weakly $\langle\omega,\omega\rangle$-dis\-tributive} if for every function $a\colon\omega\times\omega\to\B$, satisfying $a(n,m)\le a(n,m+1)$ for all $n,m<\omega$, the equality
\[
\bigwedge_{n<\omega}\bigvee_{m<\omega}a(n,m)=\bigvee_{f\in\prescript{\omega}{}{\omega}}\bigwedge_{n<\omega}a(n,f(n))
\]
holds, provided that the relevant suprema and infima exist in $\B$.
\end{definition}

For the purpose of Proposition~\ref{proposition:unc}, we find it more convenient to work with an equivalent formulation of weak distributivity, stated in the following lemma.

\begin{lemma}[Traczyk~\cite{MR0158842}]\label{lemma:traczyk} A Boolean algebra $\B$ is weakly $\langle\omega,\omega\rangle$-distributive if and only if for every set $\Set{A_n}{n<\omega}$ of countable maximal antichains in $\B$ there exists a maximal antichain $B$ such that each element of $B$ meets only finitely many elements of each $A_n$
\end{lemma}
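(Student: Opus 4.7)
\emph{Necessity.} To prove the forward direction, suppose $\B$ is weakly $\langle\omega,\omega\rangle$-distributive and let $A_n=\Set{a_{n,k}}{k<\omega}$ be countable maximal antichains. Set $b(n,m)=\bigvee_{k\le m}a_{n,k}$, a family monotone in $m$ with $\bigvee_m b(n,m)=\btop$; weak distributivity yields $\bigvee_{f\in\prescript{\omega}{}{\omega}}\bigwedge_n b(n,f(n))=\btop$. From this I would conclude that $D=\Set{e\in\B^+}{\exists f\in\prescript{\omega}{}{\omega}\;(\forall n<\omega)\,e\le b(n,f(n))}$ is dense in $\B^+$: given $c\in\B^+$, the element $\btop\setminus c$ cannot be an upper bound of $\Set{\bigwedge_n b(n,f(n))}{f\in\prescript{\omega}{}{\omega}}$, so some $f$ yields a nonzero meet of $c$ with $\bigwedge_n b(n,f(n))$, producing a witness in $D$ below $c$. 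Apply Zorn to pick a maximal antichain $B\subseteq D$; density ensures $B$ is maximal in $\B$, and every $b\in B$ lies below $b(n,f_b(n))$ for some $f_b$, hence is disjoint from $a_{n,k}$ for every $k>f_b(n)$.

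\emph{Sufficiency.} For the reverse direction, assume the antichain condition, fix a monotone $a\colon\omega\times\omega\to\B$ for which the relevant suprema and infima exist, and set $c=\bigwedge_n\bigvee_m a(n,m)$. The inequality $\bigvee_f\bigwedge_n a(n,f(n))\le c$ is immediate, so the task reduces to $c\le\bigvee_f\bigwedge_n a(n,f(n))$. For each $n$, I would form the countable maximal antichain $A_n$ consisting of the nonzero levels $d(n,m)=(a(n,m)\wedge c)\setminus(a(n,m-1)\wedge c)$ for $m\ge 0$ (with $a(n,-1)=\bbot$), together with $\btop\setminus c$; this is maximal because $\bigvee_m(a(n,m)\wedge c)=\bigl(\bigvee_m a(n,m)\bigr)\wedge c=c$, using distributivity of meets over existing suprema. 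Applying the hypothesis produces a maximal antichain $B$ meeting only finitely many elements of each $A_n$; letting $f_b(n)$ be the largest $m$ with $b\wedge d(n,m)\ne\bbot$, the element $b\wedge c$ is disjoint from every $d(n,m)$ with $m>f_b(n)$ and from $\btop\setminus c$, so maximality of $A_n$ forces it below $\bigvee_{m\le f_b(n)}d(n,m)=a(n,f_b(n))\wedge c$. Hence $b\wedge c\le\bigwedge_n a(n,f_b(n))\le\bigvee_f\bigwedge_n a(n,f(n))$ for every $b\in B$, and since $B$ is maximal in $\B$ this upgrades to $c\le\bigvee_f\bigwedge_n a(n,f(n))$.

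\emph{Expected obstacle.} The delicate part is the bookkeeping in the sufficiency direction: verifying that the $A_n$ are genuine countable maximal antichains in $\B$ (this uses the distributive identity above, which requires the outer sup to exist, as it does by hypothesis) and converting ``$b$ meets only finitely many elements of $A_n$'' into the clean bound $b\wedge c\le a(n,f_b(n))$ via the maximality of $A_n$. The necessity direction is technically lighter once the density of $D$ is set up; there one must only check that a maximal antichain inside a dense set is automatically maximal in $\B$, which is immediate.
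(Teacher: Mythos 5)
The paper does not prove this lemma at all: it is quoted from Traczyk's paper with a citation, so there is no in-text argument to compare yours against. Your proof is the standard one and is essentially correct. In the sufficiency direction, the decomposition into disjoint ``levels'' $d(n,m)=(a(n,m)\wedge c)\setminus(a(n,m-1)\wedge c)$ plus $\btop\setminus c$, the verification of maximality of each $A_n$ via the identity $\bigvee_m(a(n,m)\wedge c)=c$, the conversion of ``$b$ meets only finitely many elements of $A_n$'' into $b\wedge c\le a(n,f_b(n))$, and the final upgrade $c\le\bigvee_f\bigwedge_n a(n,f(n))$ using maximality of $B$ are all sound, including the degenerate cases ($c=\bbot$, or $b$ disjoint from all levels). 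The necessity direction is likewise the standard density argument, and the observation that a maximal antichain inside a dense set is maximal in $\B$ is correct.

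The one point worth flagging is in the necessity direction, and it is a sensitivity to the paper's conditional formulation of weak distributivity (``the equality holds, provided that the relevant suprema and infima exist''). You apply the distributive law to $b(n,m)=\bigvee_{k\le m}a_{n,k}$, for which $\bigwedge_n\bigvee_m b(n,m)=\btop$ certainly exists; but your density argument for $D$ presupposes that each infimum $\bigwedge_n b(n,f(n))$ exists, and in a Boolean algebra that is not $\sigma$-complete these infima (and the outer supremum over $f$) may fail to exist, in which case the literal hypothesis gives you nothing. This is a quirk of the definition rather than of your argument --- it is exactly the issue Traczyk's formulation is designed to handle, and it evaporates under the natural reading that the right-hand side exists and equals the left whenever the left exists --- but if you want the proof to be airtight against the stated definition you should either adopt that reading explicitly or rephrase the forward direction so that it extracts the dense set $D$ directly without naming the infima $\bigwedge_n b(n,f(n))$.
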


\begin{proposition}\label{proposition:unc} Let $\B$ be a c.c.c.\ Boolean algebra; then
\begin{enumerate}
\item\label{proposition:uncuno} $\dd(\Partb^*(\B))>\aleph_0$ if and only if $\B$ is not atomic;
\item\label{proposition:uncdue} $\bb(\Partb^*(\B))>\aleph_0$ if and only if $\B$ is weakly $\langle\omega,\omega\rangle$-distributive.
\end{enumerate}
\end{proposition}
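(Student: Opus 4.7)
For item (1), the direction from $\B$ atomic to $\dd(\Partb^*(\B)) \leq \aleph_0$ is immediate: the atoms of $\B$ form a countable maximal antichain $A_0$ (countable by c.c.c.), and every $A \in \Part(\B)$ satisfies $A \leq A_0$, since each atom lies below some element of $A$; thus $\{A_0\}$ is a dominating family and $\dd(\Partb^*(\B)) = 1$. For the converse, assume $\B$ is non-atomic and pick $a \in \B^+$ with no atom below it, so $\B \rest a$ is atomless; by Lemma~\ref{lemma:rel} it suffices to treat the case where $\B$ itself is atomless. Given countably many $B_n \in \Part(\B)$, the plan is to build $A \in \Part(\B)$ with $A \not\leq^* B_n$ for every $n$ by a diagonal recursion: enumerate pairs $(n, j) \in \omega \times \omega$ so that each $n$ recurs infinitely often, and at stage $t = (n_t, j_t)$, use atomlessness to split some element of $B_{n_t}$ below the remaining portion of $\btop$ into two disjoint positive pieces, placing them in distinct slots of the antichain being built. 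Completing the construction to a maximal antichain, one obtains for each $n$ infinitely many elements of $A$ split by some $b \in B_n$, and hence $A \not\leq^* B_n$ for every $n$.

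For item (2), the direction from $\bb(\Partb^*(\B)) > \aleph_0$ to weak distributivity is immediate via Lemma~\ref{lemma:traczyk}. Given any countable family $\{A_n\}$ of maximal antichains and an upper bound $B \in \Part(\B)$ with $A_n \leq^* B$ for all $n$, for each $n$ one has a finite $F_n \subseteq A_n$ with $A_n^{F_n} \leq B$; each $b \in B$ then lies below some element of $A_n^{F_n}$ and therefore meets at most $\max(|F_n|, 1)$ elements of $A_n$, which is finite. Hence $B$ satisfies Traczyk's criterion and $\B$ is weakly $\langle\omega, \omega\rangle$-distributive.

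For the converse of item (2), assume $\B$ is weakly distributive and let $\{A_n\}$ be countable maximal antichains with $A_n = \{a_{n, k} : k < \omega\}$. Setting $s(n, m) = \bigvee_{k \leq m} a_{n, k}$, the defining equation of weak distributivity applied to $a(n, m) = s(n, m)$ yields $\btop = \bigvee_{f \in \prescript{\omega}{}{\omega}} \bigwedge_n s(n, f(n))$. By c.c.c.\ one extracts a countable maximal antichain $B = \{b_\alpha : \alpha < \omega\}$ whose elements satisfy $b_\alpha \leq \bigwedge_n s(n, f_\alpha(n))$ for suitable $f_\alpha \in \prescript{\omega}{}{\omega}$, and the plan is to arrange that for each $n$ the set $\{f_\alpha(n) : \alpha < \omega\}$ is bounded by some $h(n) < \omega$; the finite set $F_n = \{a_{n, 0}, \ldots, a_{n, h(n)}\}$ then witnesses $A_n^{F_n} \leq B$ and hence $A_n \leq^* B$. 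The main obstacle is precisely this uniform boundedness: weak distributivity (or equivalently Traczyk's Lemma) only ensures that each individual $b \in B$ meets finitely many elements of each $A_n$, and a careful fusion construction---or a second, iterated application of weak distributivity to coalesce the countably many witnesses $f_\alpha$---is required to obtain the uniform dominator $h$.
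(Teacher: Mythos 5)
Your item (1) and the forward direction of item (2) are essentially the paper's arguments and are fine. For (1), the paper likewise reduces to the atomless case via Lemma~\ref{lemma:rel} and then arranges, for each $n$, infinitely many pairwise disjoint elements of $A$ lying strictly below elements of $B_n$; in your recursion you should take care that the ``remaining portion'' never becomes $\bbot$ (for instance, split $b\wedge r_t$ into three positive pieces and return one to the remainder), but this is routine. For the forward direction of (2), your counting argument is exactly the observation the paper combines with Lemma~\ref{lemma:traczyk}.

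The converse of item (2) contains a genuine gap, and the repair you envisage cannot succeed. If $\{f_\alpha(n):\alpha<\omega\}$ were bounded by some $h(n)$, then every $b_\alpha$ would lie below $\bigvee_{k\le h(n)}a_{n,k}$; since $B$ is a maximal antichain, this forces $\bigvee_{k\le h(n)}a_{n,k}=\btop$ and hence $A_n$ is finite. So for infinite $A_n$ the uniform dominator $h$ simply does not exist, and no fusion or iterated application of weak distributivity will produce it. (A further, smaller issue: the displayed distributivity identity is only guaranteed when the relevant suprema exist, which is why the paper works with Traczyk's reformulation rather than with the equation itself.) The idea you are missing is that $A_n\le^* B$ does not require all of $B$ to sit below one fixed finite join of elements of $A_n$; it requires all but a finitely absorbable part of $B$ to refine $A_n$ element by element. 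The paper achieves this as follows: first replace the $A_n$ by finite common refinements so that $A_n\le A_{n+1}$; take the Traczyk antichain $\{b_i : i<\omega\}$, each of whose elements meets only finitely many elements of each $A_n$; and set $B=\bigcup_{i<\omega}\bigl(\{a\wedge b_i : a\in A_i\}\setminus\{\bbot\}\bigr)$. For fixed $n$, every element of $B$ arising from some $b_i$ with $i\ge n$ lies below a single element of $A_n$ because $A_n\le A_i$, while each of the finitely many elements arising from $b_i$ with $i<n$ lies below the join of the finitely many elements of $A_n$ it meets; collecting the latter into a finite $F\subseteq A_n$ yields $A_n^F\le B$.
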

\begin{proof} \eqref{proposition:uncuno} If $\B$ is atomic, then $\B$ has a maximal antichain consisting of atoms, which immediately implies $\dd(\Partb^*(\B))=1$.

To establish the other implication, suppose first $\B$ is atomless. Let $\Set{B_n}{n<\omega}$ be a set of maximal antichains in $\B$; we shall find a maximal antichain $A$ such that $A\not\le^* B_n$ for all $n<\omega$. Let $\Set{a_n}{n<\omega}$ be a fixed antichain; for each $n<\omega$ let us choose $b\in B_n$ such that $a_n\wedge b>\bbot$. By atomlessness, let $A_n$ be an infinite antichain in $\B\rest(a_n\wedge b)$ and then let $A$ be a maximal antichain in $\B$ including $\bigcup_{n<\omega}A_n$. Towards a contradiction, suppose there exists $n<\omega$ such that $A\le^* B_n$; by definition there exists a finite subset $F\subset A$ such that $A^F\le B_n$. If $a\in A_n\setminus F$, then $a$ is greater than or equal to some element of $B_n$, which is a contradiction. For the general case, if $\B$ is not atomic then there exists $a\in\B^+$ such that $\B\rest a$ is atomless. By what we have shown so far, $\dd(\Partb^*(\B\rest a))>\aleph_0$, but Lemma~\ref{lemma:rel} guarantees that $\dd(\Partb^*(\B\rest a))\le\dd(\Partb^*(\B))$.

\eqref{proposition:uncdue} The left-to-right implication follows from Lemma~\ref{lemma:traczyk} and the observation that, if $B$ almost refines $A$, then each element of $B$ meets only finitely many elements of $A$.

Conversely, suppose $\B$ is weakly $\langle\omega,\omega\rangle$-distributive. Let $\Set{A_n}{n<\omega}$ be a set of maximal antichains in $\B$; we shall find a maximal antichain $B$ such that $A_n\le^* B$ for all $n<\omega$. Without loss of generality, suppose $A_n\le A_{n+1}$ for all $n<\omega$. By Lemma~\ref{lemma:traczyk}, there exists a maximal antichain $\Set{b_i}{i<\omega}$ in $\B$ such that for every $i<\omega$ and every $n<\omega$, the element $b_i$ meets only finitely many elements of $A_n$. Now consider for each $i<\omega$ the finite set
\[
F_i=\Set{a\wedge b_i}{a\in A_i}\setminus\{\bbot\}
\]
and let $B=\bigcup_{i<\omega}F_i$, which is a maximal antichain refining $\Set{b_i}{i<\omega}$. Given $n<\omega$, we have that for all $b\in\bigcup_{i\ge n}F_i$ there exists $a\in A_n$ such that $b\le a$, from which we can conclude that $B$ almost refines $A_n$.
\end{proof}

We further analyse $\Partb^*(\B)$ using generalized Galois-Tukey connections.

\begin{theorem}\label{theorem:density} If $\B$ is a c.c.c.\ Boolean algebra, then
\[
{\langle\B^+,\ge,\B^+\rangle}^\sigma\le_\mathrm{T}\Partb^*(\B)\le_\mathrm{T}\Partb(\B).
\]
\end{theorem}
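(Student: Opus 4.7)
The second inequality $\Partb^*(\B)\le_\mathrm{T}\Partb(\B)$ is immediate: since $A\le B$ implies $A\le^* B$ (take $F=\emptyset$), the identity maps on $\Part(\B)$ form a generalized Galois-Tukey connection.

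For the first inequality, the plan is to construct maps $\varphi_-\colon\B^+\to\Part(\B)$ and $\varphi_+\colon\Part(\B)\to\prescript{\omega}{}{\B^+}$ as follows. The c.c.c.\ assumption supplies two key countability facts: every maximal antichain of $\B$ is countable, and the set $A^*$ of atoms of $\B$ (itself an antichain) is countable. I would let $\varphi_+(B)$ enumerate the countable set $B\cup A^*$. For $\varphi_-$, split into cases: if $\B\rest a$ is atomless, pick an infinite antichain $\{c_n^a:n<\omega\}$ below $a$ and extend it by Zorn to a maximal antichain $\varphi_-(a)$ of $\B$; otherwise, let $\varphi_-(a)$ be any maximal antichain.

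The verification assumes $\varphi_-(a)\le^* B$, witnessed by a finite $F\subseteq\varphi_-(a)$ with $\varphi_-(a)^F\le B$. If $\B\rest a$ has an atom $a^*$, then $a^*$ is automatically an atom of $\B$ (any $\B$-element strictly between $\bbot$ and $a^*$ would itself lie in $\B\rest a$), so $a^*\in A^*\subseteq\mathrm{ran}(\varphi_+(B))$ and $a\ge a^*$. If $\B\rest a$ is atomless, pick $n$ with $c_n^a\notin F$; then $c_n^a\in\varphi_-(a)^F$, and a standard maximality-of-$B$ argument (any $b\in B$ meeting $c_n^a$ lies below an element of the antichain $\varphi_-(a)^F$, which must then be $c_n^a$ itself) produces $b\in B$ with $b\le c_n^a\le a$, where $b\in\mathrm{ran}(\varphi_+(B))$.

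The main obstacle is the atomic case: when $a$ sits above an atom of $\B$, the finite exceptional set $F$ in the definition of $\le^*$ can absorb the (possibly few) elements of $\varphi_-(a)$ below $a$, so $\varphi_-$ alone is powerless. Enriching $\varphi_+(B)$ with $A^*$ decouples this case from the choice of $B$ and is the key move that makes the connection go through uniformly.
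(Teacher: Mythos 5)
Your proposal is correct and follows essentially the same route as the paper: the identity maps for the second inequality, and for the first, $\varphi_-(a)$ a maximal antichain with infinitely many elements below $a$ when $\B\rest a$ is atomless (arbitrary otherwise), with $\varphi_+(B)$ enumerating $B$ together with the (countable, by c.c.c.) set of atoms of $\B$ precisely to handle the case where $\B\rest a$ has an atom. The verification, including the maximality-of-$B$ argument showing some $b\in B$ lies below the chosen $c_n^a\notin F$, matches the paper's.
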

\begin{proof} To establish ${\langle\B^+,\ge,\B^+\rangle}^\sigma\le_\mathrm{T}\Partb^*(\B)$, we define two functions
\[
\varphi_-\colon\B^+\to\Part(\B)\quad\text{and}\quad\varphi_+\colon\Part(\B)\to\prescript{\omega}{}{\B^+}
\]
such that for every $b\in\B^+$ and every $B\in\Part(\B)$
\begin{equation}\label{eq:dt}
\varphi_-(b)\le^*B\implies(\exists n<\omega)\bigl(\varphi_+(B)(n)\le b\bigr).
\end{equation}

Given $b\in\B^+$, we distinguish two cases: if $\B\rest b$ is atomless, let $\varphi_-(b)$ be any maximal antichain in $\B$ such that $\varphi_-(b)\cap\B\rest b$ is infinite. If not, let $\varphi_-(b)$ be defined arbitrarily. On the other hand, let $A$ be the set of atoms of $\B$. Given $B\in\Part(\B)$, by the c.c.c.\ let $\varphi_+(B)\colon\omega\to A\cup B$ be any surjective function. We check that \eqref{eq:dt} is satisfied: let $b\in\B^+$ and $B\in\Part(\B)$ and suppose that $\varphi_-(b)\le^*B$. If $\B\rest b$ has an atom, then there exists $a\in A$ such that $a\le b$ and we are done. If $\B\rest b$ is atomless, let $F\subset\varphi_-(b)$ be a finite subset such that $\varphi_-(b)^F\le B$. Pick $c\in(\varphi_-(b)\cap\B\rest b)\setminus F$ and note that $c$ must be greater than or equal to some element of $B$, which gives the conclusion.

Finally, to establish $\Partb^*(\B)\le_\mathrm{T}\Partb(\B)$, it is sufficient to observe that the relation of refinement is stronger than the relation of almost refinement, hence the identity gives a generalized Galois-Tukey connection.
\end{proof}

In particular, we obtain that the relational systems $\Partb(\B)$ and $\Partb^*(\B)$, despite not being necessarily Galois-Tukey equivalent, always have the same dominating number.

\begin{corollary}\label{corollary:mis} If $\B$ is a c.c.c.\ Boolean algebra, then $\dd(\Partb^*(\B))=\dd(\Partb(\B))$.
\end{corollary}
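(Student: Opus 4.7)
The inequality $\dd(\Partb^*(\B))\le\dd(\Partb(\B))$ is immediate from the reduction $\Partb^*(\B)\le_{\mathrm T}\Partb(\B)$ in Theorem~\ref{theorem:density}, via Remark~\ref{remark:vojtas}. The substance of the corollary lies in the reverse inequality, and my plan is to convert a $\le^*$-dominating family $\mathcal{D}\subseteq\Part(\B)$ of size $\kappa$ into a $\le$-dominating family of the same cardinality. The atomic case is settled directly by Proposition~\ref{proposition:unc}(\ref{proposition:uncuno}), which gives $\dd(\Partb^*(\B))=\dd(\Partb(\B))=1$, so I will assume that $\B$ is non-atomic and $\kappa\ge\aleph_1$.

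The crucial structural observation to exploit, already implicit in the proof of Proposition~\ref{proposition:unc}(\ref{proposition:uncdue}), is that if $A\le^*B$ via $F\subseteq A$ with $\bigvee F\le b\in B$, then every element of $B$ meets only finitely many elements of $A$; consequently the restriction $A\rest b=\{a\wedge b:a\in A\}\setminus\{\bbot\}$ is a \emph{finite} maximal antichain of the relative algebra $\B\rest b$, while the antichain $B\setminus\{b\}$ already refines $A$ outside $b$. Thus the only remaining obstruction to producing some $B'\in\Part(\B)$ with $A\le B'$ lies in refining the finite antichain $A\rest b$ inside $\B\rest b$. A natural candidate family is
\[
\mathcal{D}'=\Set*{(B\setminus\{b\})\cup(\{c\wedge b:c\in C\}\setminus\{\bbot\})}{B,C\in\mathcal{D},\;b\in B},
\]
of cardinality at most $\kappa\cdot\aleph_0\cdot\kappa=\kappa$, using the c.c.c.\ bound on $\abs{B}$.

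Checking that $\mathcal{D}'$ is $\le$-dominating reduces to finding, for each $A\in\Part(\B)$, members $B,C\in\mathcal{D}$ and $b\in B$ such that $\{c\wedge b:c\in C\}\setminus\{\bbot\}$ refines the finite antichain $A\rest b$ in $\B\rest b$. The main obstacle is exactly this step: by an argument parallel to Lemma~\ref{lemma:rel}, the collection $\{C\rest b:C\in\mathcal{D}\}$ is $\le^*$-dominating in $\B\rest b$, but $\le^*$-domination is trivially vacuous on finite maximal antichains and therefore insufficient on its own. To overcome this, I would enlarge $\mathcal{D}'$ to the closure of $\mathcal{D}$ under finite meets $B_1\wedge\dotsb\wedge B_n$ (still of size $\kappa^{<\omega}=\kappa$) and argue, using the finiteness of $A\rest b$, that iterating $\le^*$-domination applied to successively smaller finite antichains below $b$ strictly shrinks the residual bad set at each step, terminating after at most $\abs{A\rest b}$ iterations. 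An alternative route would be to exhibit a generalized Galois-Tukey reduction $\Partb(\B)\le_{\mathrm T}\Partb^*(\B)^\sigma$, whereupon Proposition~\ref{proposition:sigma} together with $\kappa\ge\aleph_1$ would yield $\dd(\Partb(\B))\le\dd(\Partb^*(\B)^\sigma)\le\dd(\Partb^*(\B))+\aleph_0=\dd(\Partb^*(\B))$.
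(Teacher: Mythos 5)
The easy inequality $\dd(\Partb^*(\B))\le\dd(\Partb(\B))$ and the reduction to the non-atomic case are fine, and you have correctly located where the work lies: after finding $B$ with $A\le^*B$ via a finite $F\subseteq A$, everything is refined except the finite antichain $F$ below $c=\bigvee F$, and $\le^*$-domination is vacuous on finite antichains. But your proposed resolution does not close this gap. Closing $\mathcal{D}$ under finite meets and ``iterating'' has no termination argument: if at each stage you split the current bad pieces into infinite antichains and apply $\le^*$-domination, the residual bad region does shrink strictly, but in an atomless algebra a strictly decreasing sequence of nonzero elements never reaches $\bbot$, the number of bad pieces need not decrease, and nothing forces the exceptional sets returned by $\mathcal{D}$ to eventually miss the region below $c$; the claimed bound of $\abs{A\rest b}$ iterations is unsupported. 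Moreover, a patchwork assembled over infinitely many stages is no longer a finite meet of members of $\mathcal{D}$. (There is also a slip in the setup: $\bigvee F$ is a \emph{join} of elements of $B$, not below a single $b\in B$.) The alternative route via $\Partb(\B)\le_{\mathrm T}{\Partb^*(\B)}^\sigma$ is a restatement of the goal rather than an argument, and for atomless $\B$ the natural candidates fail for the same reason: a single antichain $\varphi_+(f)$ would have to refine every finite partition of some fixed $b\in\B^+$, which is impossible.

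The missing ingredient is the other half of Theorem~\ref{theorem:density}: from ${\langle\B^+,\ge,\B^+\rangle}^\sigma\le_{\mathrm T}\Partb^*(\B)$ together with Proposition~\ref{proposition:unc}.\eqref{proposition:uncuno} one extracts a \emph{dense} set $D\subseteq\B^+$ of cardinality at most $\dd(\Partb^*(\B))$. One may then replace the given $A$ by a refinement contained in $D$, so that the exceptional set $F$ is one of the at most $\dd(\Partb^*(\B))$ many finite antichains in $D$, and the dominating family
\[
\Set{B_F}{B\in\mathcal{B}^*\text{ and }F\text{ a finite antichain in }D},\qquad B_F=F\cup\Set{b\wedge\neg\textstyle\bigvee F}{b\in B}\setminus\{\bbot\},
\]
simply adjoins each candidate $F$ verbatim instead of trying to refine it. Without some bound on the density of $\B$ in terms of $\dd(\Partb^*(\B))$, the finite-patch problem cannot be handled by the counting argument you propose, so the proof as written is incomplete.
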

\begin{proof} The inequality $\dd(\Partb^*(\B))\le\dd(\Partb(\B))$ follows already from Remark~\ref{remark:vojtas} and Theorem~\ref{theorem:density}.

To prove the converse inequality, if $\B$ is atomic then clearly $\dd(\Partb(\B))=1$ and we are done. Hence, we may assume that $\B$ is not atomic. Let $\mathcal{B}^*\subseteq\Part(\B)$ be a set of cardinality $\dd(\Partb^*(\B))$ such that for all $A\in\Part(\B)$ there exists $B\in\mathcal{B}^*$ such that $A\le^*B$. Similarly, let $\mathcal{D}\subseteq\prescript{\omega}{}{\B^+}$ be a set of cardinality $\dd\bigl({\langle\B^+,\ge,\B^+\rangle}^\sigma\bigr)$ such that for every $b\in\B^+$ there exists $d\in\mathcal{D}$ such that $b\ge^\sigma d$. This condition implies that the set $D=\Set{d(n)}{d\in\mathcal{D}\text{ and }n<\omega}$ is dense in $\B$.

For every $B\in\mathcal{B}^*$ and every finite antichain $F\subset D$, we define a maximal antichain
\[
B_F=F\cup\Set*{b\wedge\neg\bigvee F}{b\in B}\setminus\{\bbot\}
\]
and then we let
\[
\mathcal{B}=\Set{B_F}{B\in\mathcal{B}^*\text{ and }F\text{ is a finite antichain in }D}.
\]
Now, let $A$ be an arbitrary maximal antichain; by density we may assume that $A\subseteq D$. By the choice of $\mathcal{B}^*$, there exists $B\in\mathcal{B}^*$ such that $A\le^*B$, which means there exists a finite subset $F\subseteq A$ such that $A^F\le B$, therefore $A\le B_F$, as desired.

In conclusion, we have shown that
\[
\dd(\Partb(\B))\le\abs{\mathcal{B}}\le\dd(\Partb^*(\B))+\dd\bigl({\langle\B^+,\ge,\B^+\rangle}^\sigma\bigr)+\aleph_0.
\]
However, Proposition~\ref{proposition:unc}.\eqref{proposition:uncuno} gives that $\aleph_0<\dd(\Partb^*(\B))$ and Theorem~\ref{theorem:density} gives that $\dd({\langle\B^+,\ge,\B^+\rangle}^\sigma)\le\dd(\Partb^*(\B))$, whence we can conclude that $\dd(\Partb(\B))\le\dd(\Partb^*(\B))$.
\end{proof}

We move on to discuss the relation with a strong form of the countable chain condition.

\begin{definition}[Horn and Tarski~\cite{MR0028922}] A Boolean algebra $\B$ is \emph{$\sigma$-finite c.c.}\ if there are subsets $S_n\subseteq\B^+$, for $n<\omega$, such that every antichain in $S_n$ is finite and $\B^+=\bigcup_{n<\omega}S_n$.
\end{definition}

As usual, for functions $f,g\in\prescript{\omega}{}{\omega}$, we write $f\le^* g$ to mean than $f(n)\le g(n)$ for all but finitely many $n<\omega$. 

\begin{theorem}\label{theorem:sfcc} If $\B$ is a non-atomic $\sigma$-finite c.c.\ Boolean algebra, then
\[
\langle\prescript{\omega}{}{\omega},\le^*,\prescript{\omega}{}{\omega}\rangle\le_\mathrm{T}\Partb^*(\B).
\]
\end{theorem}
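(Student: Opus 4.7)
The plan is to build a generalised Galois-Tukey connection from $\langle\prescript{\omega}{}\omega,\le^*,\prescript{\omega}{}\omega\rangle$ to $\Partb^*(\B)$, using the structural parameters supplied by a $\sigma$-finite c.c.\ witness for~$\B$. I would begin by reducing to the atomless case via Lemma~\ref{lemma:rel}: since $\B$ is non-atomic, some $a\in\B^+$ has $\B\rest a$ atomless, and $\B\rest a$ remains $\sigma$-finite c.c., so a connection built over $\B\rest a$ transports up. I then fix a witness $\B^+=\bigcup_{n<\omega}S_n$ with $S_n\subseteq S_{n+1}$, and let $M_n<\omega$ be the maximum antichain size in~$S_n$. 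Non-atomic c.c.c.\ algebras admit antichains of arbitrary finite size, so $M_n\to\infty$, and by passing to a cofinal subsequence of the $S_n$'s I may arrange $M_n\ge n+1$.

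For $\varphi_+$ I set $\varphi_+(B)(n)=\abs{B\cap S_n}$, which lies in $\omega$ since $B\cap S_n$ is an antichain contained in~$S_n$. For $\varphi_-$ I would construct, by induction on~$n$, a pairwise disjoint sequence $\{a_n : n<\omega\}\subseteq\B^+$ together with antichains $F_n=\{e_{n,0},\dots,e_{n,n}\}\subseteq S_n$ of size $n+1$ with $F_n\subseteq\B\rest a_n$; the existence of $F_n$ at each stage combines $M_n\ge n+1$ (to supply the large antichain inside $S_n$) with atomlessness of $\B\rest(\btop\setminus\bigvee_{m<n}a_m)$ (to make room for a fresh~$a_n$). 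Then $\varphi_-(f)$ is any maximal antichain of $\B$ extending $\bigcup_n\{e_{n,i}:i\le\min\{f(n),n\}\}$; the cap at $n$ costs nothing, because if the connection delivers $\min\{f,\mathrm{id}\}\le^*\varphi_+(B)$, then redefining $\varphi_+$ to be $n\mapsto\max\{\abs{B\cap S_n},n\}$ yields $f\le^*\varphi_+(B)$ for the original~$f$.

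The verification proceeds as follows: if $\varphi_-(f)\le^* B$, then for cofinitely many $n$ and each $i\le\min\{f(n),n\}$ the element $e_{n,i}$ is a union of elements of $B$, hence there is a distinct $b_{n,i}\in B$ with $b_{n,i}\le e_{n,i}$, the distinctness coming from the pairwise disjointness of the $e_{n,i}$'s. The goal is to conclude $\abs{B\cap S_n}\ge\min\{f(n),n\}+1$. The main obstacle I foresee lies exactly here: in a general $\sigma$-finite c.c.\ Boolean algebra the sets $S_n$ need not be downward closed, so although $b_{n,i}\le e_{n,i}\in S_n$ the element $b_{n,i}$ may itself have rank strictly greater than~$n$ and so fail to contribute to $B\cap S_n$. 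Resolving this will require either tailoring the choice of $F_n$ so that each $e_{n,i}$ sits inside a portion of $S_n$ which is downward closed within $\B\rest a_n$ (forcing every $b\le e_{n,i}$ to remain in $S_n$), or else modifying $\varphi_+$ to count $B$-elements lying below preassigned test-elements of controlled $S_n$-complexity; navigating the tension between atomlessness (which makes the $b_{n,i}$'s strictly smaller than the $e_{n,i}$'s, and hence potentially of larger rank) and the permissible shape of the $\sigma$-finite c.c.\ witness is where the real work of the proof should concentrate.
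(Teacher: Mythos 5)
Your overall architecture (reduce to the atomless case via Lemma~\ref{lemma:rel}, then read off a function from $B$ by measuring how $B$ interacts with the sets $S_n$) matches the paper's, but the verification has a genuine gap, and it is exactly the one you flag: from $b_{n,i}\le e_{n,i}\in S_n$ you cannot conclude $b_{n,i}\in S_n$, so $\abs{B\cap S_n}$ need not see the $\min\{f(n),n\}+1$ witnesses you produced. Moreover, neither of your proposed repairs can work as stated. You cannot pass to downward closed $S_n$'s: the downward closure of a single nonzero element already contains infinite antichains once $\B$ is atomless, so the finite-antichain property is destroyed. For the same reason there is no $e\in\B^+$ with $(\B\rest e)^+\subseteq S_n$: the relative algebra $\B\rest e$ is atomless, hence contains an infinite antichain, whose members must spread over infinitely many of the $S_k$'s; so ``a portion of $S_n$ which is downward closed within $\B\rest a_n$'' does not exist. (There is also a smaller unaddressed point: an antichain of size $n+1$ inside $S_n$ need not be available below a fresh $a_n$ disjoint from $a_0,\dots,a_{n-1}$; this is fixable by a further subsequence argument, but it is not automatic.)

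The paper resolves the tension you describe by dualizing the bookkeeping. One may assume for free that each $S_n$ is \emph{upward} closed: if $A$ is an antichain in the upward closure of $S_n$, choosing $b_a\in S_n$ with $b_a\le a$ gives an injection of $A$ into an antichain contained in $S_n$, so the finite-antichain property survives. Fix a maximal antichain $\Set{a_i}{i<\omega}$. Atomlessness makes $\B^+\setminus\bigcup_{n<f(i)}S_n$ dense, so one can write $a_i=\bigvee A^i_f$ with $A^i_f$ an antichain disjoint from $\bigcup_{n<f(i)}S_n$, and set $\varphi_-(f)=\bigcup_{i<\omega}A^i_f$; dually, $\varphi_+(B)(i)$ is the \emph{least} $n$ such that some $b\in B\cap S_n$ lies below $a_i$. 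If $\varphi_-(f)\le^* B$, then for large $i$ such a $b$ lies below some $a\in A^i_f$, and upward closure of $S_{\varphi_+(B)(i)}$ transfers membership from $b$ up to $a$; since $a\notin\bigcup_{n<f(i)}S_n$, this forces $\varphi_+(B)(i)\ge f(i)$. In short: high rank of the test elements propagates \emph{down} to the refining elements of $B$ via the achievable upward-closure normalization, whereas your counting argument needs rank to propagate \emph{up}, which cannot be arranged.
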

\begin{proof} By Lemma~\ref{lemma:rel}, we may further assume that $\B$ is atomless. We aim to construct two functions
\[
\varphi_-\colon\prescript{\omega}{}{\omega}\to\Part(\B)\quad\text{and}\quad\varphi_+\colon\Part(\B)\to\prescript{\omega}{}{\omega}
\]
such that for every $f\colon\omega\to\omega$ and every $B\in\Part(\B)$ we have
\begin{equation}\label{eq:sftukey}
\varphi_-(f)\le^* B\implies f\le^*\varphi_+(B).
\end{equation}
By hypothesis, there exists a decomposition $\B^+=\bigcup_{n<\omega}S_n$ such that every antichain in $S_n$ is finite. Without loss of generality, for each $n<\omega$ we may further assume that if $b\in S_n$ and $b\le a$ then $a\in S_n$. Finally, we also fix a maximal antichain $\Set{a_i}{i<\omega}$ in $\B$ throughout.

Given $f\colon\omega\to\omega$, we proceed to define $\varphi_-(f)$. First, it is easy to see that for each $i<\omega$ the set $\B^+\setminus\bigcup_{n<f(i)}S_n$ is dense. Indeed, let $a\in\B^+$; by atomlessness, there exists an infinite antichain $B$ such that $a=\bigvee B$. Since all antichains in $S_n$ are finite, there must be some $b\in B\setminus\bigcup_{n<f(i)}S_n$, as desired. Consequently, we can choose for each $i<\omega$ an antichain $A^i_f$ such that $a_i=\bigvee A^i_f$ and $A^i_f\cap\bigcup_{n<f(i)}S_n=\emptyset$; then we define
\[
\varphi_-(f)=\bigcup_{i<\omega}A^i_f.
\]

On the other hand, let $B$ be a maximal antichain in $\B$. We define $\varphi_+(B)\in\prescript{\omega}{}{\omega}$ as follows: given $i<\omega$, let
\[
\varphi_+(B)(i)=\min\Set{n<\omega}{\text{there exists }b\in B\cap S_n\text{ such that }b\le a_i}
\]
if there is such an $n$, otherwise let $\varphi_+(B)(i)=0$.

To establish \eqref{eq:sftukey}, suppose that $\varphi_-(f)\le^* B$. This implies the existence of $i_0<\omega$ such that if $i\ge i_0$ then every element of $A^i_f$ is the supremum of a subset of $B$. Let $i\ge i_0$; as we are in the first case of the definition of $\varphi_+(B)$, there exists $b\in B\cap S_{\varphi_+(B)(i)}$ such that $b\le a_i$. By the assumption on $i$, we can find $a\in A^i_f$ with $b\le a$. By upwards closure we have $a\in S_{\varphi_+(B)(i)}$, but $a\notin\bigcup_{n<f(i)}S_n$, hence $f(i)\le\varphi_+(B)(i)$ as we wanted to show.
\end{proof}

\begin{remark} If $\mathbb{S}$ is a Suslin algebra, then $\dd(\Partb^*(\mathbb{S}))=\aleph_1$. Such an algebra exists in the Cohen model, where $\dd=2^{\aleph_0}$. Thus, the conclusion of Theorem~\ref{theorem:sfcc} does not hold in general for non-atomic c.c.c.\ Boolean algebras.
\end{remark}

Let $\mathcal{B}(\prescript{\omega}{}{2})$ be the $\sigma$-algebra generated by the clopen subsets of the Cantor space $\prescript{\omega}{}{2}$. We consider the nowhere dense ideal $\nwd(\prescript{\omega}{}{2})$ and the meagre ideal $\M$ over $\prescript{\omega}{}{2}$. The \emph{Cohen algebra} is the quotient Boolean algebra $\C_\omega=\mathcal{B}(\prescript{\omega}{}{2})/\M$, thus elements of $\C_\omega$ are represented as equivalence classes $\eq{X}{\M}$ for $X\in\mathcal{B}(\prescript{\omega}{}{2})$. 

For the remainder of this section, we focus on $\Partb^*(\C_\omega)$. First, we fix some standard notation: for every $s\in\prescript{<\omega}{}{2}$, let
\[
N_s=\Set{f\in\prescript{\omega}{}{2}}{s\subset f}.
\]
Furthermore, for every $n<\omega$ we denote
\[
\langle 0^n\rangle=\bigl\langle\underbrace{0,\dots,0}_{n\text{ times}}\bigr\rangle\in\prescript{n}{}{2}\quad\text{and}\quad
\langle 0^n1\rangle=\bigl\langle\underbrace{0,\dots,0}_{n\text{ times}},1\bigr\rangle\in\prescript{n+1}{}{2}.
\]

\begin{lemma}\label{lemma:ref} For every $A\in\Part(\C_\omega)$ there exists $S\subseteq\prescript{<\omega}{}{2}$ such that:
\begin{enumerate}
\item\label{refuno} $\Set{\eq{N_s}{\M}}{s\in S}$ is a maximal antichain refining $A$;
\item\label{refdue} for every $t\in\prescript{<\omega}{}{2}$, if $N_t\subseteq\bigcup_{s\in S}N_s$ then there exists $s\in S$ such that $s\subseteq t$.
\end{enumerate}
\end{lemma}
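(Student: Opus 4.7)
The plan is to realise $S$ as the set of minimal nodes outside a carefully chosen pruned subtree $T^*\subseteq\prescript{<\omega}{}{2}$ whose body $[T^*]$ is nowhere dense: then~\eqref{refuno} will follow from the shape of $T^*$ and~\eqref{refdue} from its prunedness.

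First I would use the standard identification of $\C_\omega$ with the algebra of regular open subsets of $\prescript{\omega}{}{2}$ and fix, for each $a\in A$, a regular open set $U_a\subseteq\prescript{\omega}{}{2}$ with $a=\eq{U_a}{\M}$. Because $A$ is a maximal antichain in $\C_\omega$, the family $\Set{U_a}{a\in A}$ is pairwise disjoint and $E=\prescript{\omega}{}{2}\setminus\bigcup_{a\in A}U_a$ is closed and meagre, hence nowhere dense. Setting
\[
R=\Set{t\in\prescript{<\omega}{}{2}}{N_t\not\subseteq U_a\text{ for every }a\in A}
\]
gives a subtree of $\prescript{<\omega}{}{2}$ with body $[R]=E$, so that $R^c$ is dense in $\prescript{<\omega}{}{2}$.

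The subtree $R$ typically has terminal nodes---nodes $t$ whose two children both lie in $R^c$ (necessarily inside \emph{distinct} $U_a$'s)---and such terminals would wreck~\eqref{refdue}. I would remedy this by attaching, at every terminal $t$ of $R$, the skinny branch $\Set{t\frown 0^j}{j\ge 1}$; call the resulting tree $T^*$. (In the degenerate case $A=\{\btop\}$ one has $R=\emptyset$ and I would simply take $T^*=\Set{0^k}{k\ge 0}$.) By construction $T^*$ is pruned. Finally, set
\[
S=\Set{s\in\prescript{<\omega}{}{2}\setminus T^*}{s\rest(\abs{s}-1)\in T^*}.
\]

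To verify~\eqref{refuno}, note that $S$ is a $\subseteq$-antichain and every $s\in S$ is either in $R^c$ outright or of the form $s=t\frown 0^j\frown 1$ with $t$ terminal in $R$ and $j\ge 0$; in both cases $N_s\subseteq U_a$ for some $a\in A$, so $\Set{\eq{N_s}{\M}}{s\in S}$ refines $A$. Pairwise disjointness of the $\eq{N_s}{\M}$'s is immediate, and maximality reduces to showing that $[T^*]=\prescript{\omega}{}{2}\setminus\bigcup_{s\in S}N_s$ is nowhere dense: given any $t$, density of $R^c$ supplies some $t'\supseteq t$ in $R^c$, and then either $t'\notin T^*$, or $t'$ lies on an added branch and $t'\frown 1\notin T^*$---so a descendant of $t$ always lies outside $T^*$, whence $[T^*]$ has empty interior. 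For~\eqref{refdue}, if $N_t\cap[T^*]=\emptyset$ then prunedness of $T^*$ forbids $t\in T^*$, since otherwise $t$ could be extended within $T^*$ to a branch of $[T^*]\cap N_t$; hence $t\notin T^*$ and some $s\in S$ must be a prefix of $t$.

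The main obstacle is precisely the balancing act in the definition of $T^*$: the extensions grafted onto $R$ must be rich enough to make the tree pruned (so~\eqref{refdue} follows from the contrapositive just given) yet sparse enough that $[T^*]$ remains nowhere dense (so that the antichain in~\eqref{refuno} is maximal). Restricting the added branches to the $0$-direction is exactly what leaves enough $1$-side escape nodes to preserve nowhere density.
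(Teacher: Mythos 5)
Your argument is correct, but it takes a genuinely different route from the paper's. The paper first uses density of the basic clopen classes to choose $R\subseteq\prescript{<\omega}{}{2}$ with $\Set{\eq{N_r}{\M}}{r\in R}$ a maximal antichain refining $A$, and then simply sets $S=\Set{r^\frown\langle 0^n1\rangle}{r\in R\text{ and }n<\omega}$; since every element of $S$ ends in $1$, condition (2) follows by noting that if $N_t\subseteq\bigcup_{s\in S}N_s$ then the branch obtained by extending $t$ with constant $0$ passes through some $N_s$, and that $s$ is forced to be an initial segment of $t$. You instead pass to regular open representatives $U_a$, form the tree $R$ of nodes not decided by any $U_a$, graft $0$-branches at its terminal nodes to obtain a pruned tree $T^*$ with nowhere dense body, and take $S$ to be the frontier of $T^*$, deriving condition (2) from prunedness. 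The two constructions are morally close: your grafted $0$-branches play exactly the role of the paper's $\langle 0^n1\rangle$-splitting, and in both cases $\prescript{\omega}{}{2}\setminus\bigcup_{s\in S}N_s$ is the body of a pruned nowhere dense tree. What your version buys is a clean isolation of the abstract reason condition (2) holds (the complementary tree is pruned) and an explicit identification of the residual set $[T^*]$; the cost is the extra bookkeeping with regular open representatives, terminal nodes, and the degenerate case $A=\{\btop\}$, all of which you do handle correctly. The paper's version is shorter because refining to basic clopen sets \emph{first} makes the subsequent splitting uniform and eliminates the case analysis; if you wanted to streamline your write-up, you could note that taking a clopen refinement at the outset collapses your tree $R$ to the downward closure of an antichain, after which your grafting step becomes literally the paper's $\langle 0^n1\rangle$ trick.
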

\begin{proof} Given $A\in\Part(\C_\omega)$, by density there exists $R\subseteq\prescript{<\omega}{}{2}$ such that $\Set{\eq{N_r}{\M}}{r\in R}$ is a maximal antichain refining $A$. Now let
\[
S=\Set{r^\frown\langle 0^n1\rangle}{r\in R\text{ and }n<\omega}.
\]
It is clear that for each $r\in R$
\[
\eq{N_r}{\M}=\bigvee_{n<\omega}\eq*{N_{r^\frown\langle 0^n1\rangle}}{\M}
\]
in $\C_\omega$, hence condition~\eqref{refuno} is satisfied. To prove \eqref{refdue}, suppose $N_t\subseteq\bigcup_{s\in S}N_s$ for some $t\in\prescript{<\omega}{}{2}$. Let $c_0\in\prescript{\omega}{}{2}$ be the sequence with constant value $0$; since $t^\frown c_0\in N_t$, there exists $s\in S$ such that $s\subset t^\frown c_0$. But $s$ is a finite sequence ending in $1$, hence $s\subseteq t$ as desired.
\end{proof}

The next theorem establishes an equivalence between maximal antichains in $\C_\omega$ and nowhere dense subsets of $\prescript{\omega}{}{2}$.

\begin{theorem}\label{theorem:mainc} $\langle\nwd(\prescript{\omega}{}{2}),\subseteq,\nwd(\prescript{\omega}{}{2})\rangle\equiv_\mathrm{T}\Partb^*(\C_\omega)\equiv_\mathrm{T}\Partb(\C_\omega)$.
\end{theorem}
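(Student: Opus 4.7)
The plan is to prove the two non-trivial Galois-Tukey reductions $\mathbf{N}\le_\mathrm{T}\Partb^*(\C_\omega)$ and $\Partb(\C_\omega)\le_\mathrm{T}\mathbf{N}$, writing $\mathbf{N}=\langle\nwd(\prescript{\omega}{}{2}),\subseteq,\nwd(\prescript{\omega}{}{2})\rangle$; combined with $\Partb^*(\C_\omega)\le_\mathrm{T}\Partb(\C_\omega)$ from Theorem~\ref{theorem:density}, these close the circle.

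Both reductions will be built from a single pair of canonical translations. To each $N\in\nwd(\prescript{\omega}{}{2})$ I attach the antichain $T_N\subseteq\prescript{<\omega}{}{2}$ of minimal $t$ with $N_t\cap\overline{N}=\emptyset$; since $\overline{N}$ is closed nowhere dense, the equality $\bigcup_{t\in T_N}N_t=\prescript{\omega}{}{2}\setminus\overline{N}$ exhibits a comeagre dense open set, so $\Set{\eq{N_t}{\M}}{t\in T_N}$ is a maximal antichain of $\C_\omega$. Conversely, Lemma~\ref{lemma:ref} lets me fix, for each $B\in\Part(\C_\omega)$, a set $S_B\subseteq\prescript{<\omega}{}{2}$ such that $\Set{\eq{N_s}{\M}}{s\in S_B}$ is a maximal antichain refining $B$; since this is maximal, $\bigcup_{s\in S_B}N_s$ is dense open and its complement is closed nowhere dense. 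I will use silently that $\eq{N_t}{\M}\le\eq{N_s}{\M}$ iff $s\subseteq t$, a consequence of the fact that any meagre clopen subset of $\prescript{\omega}{}{2}$ is empty.

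For $\mathbf{N}\le_\mathrm{T}\Partb^*(\C_\omega)$, I set $\varphi_-(N)=\Set{\eq{N_t}{\M}}{t\in T_N}$ and $\varphi_+(B)=\prescript{\omega}{}{2}\setminus\bigcup_{s\in S_B}N_s$. Assume $\varphi_-(N)\le^* B$ as witnessed by a finite $F'\subseteq T_N$. For every $s\in S_B$, the element $\eq{N_s}{\M}$ lies below some member of $\varphi_-(N)^{\{\eq{N_t}{\M}:t\in F'\}}$: either $\eq{N_s}{\M}\le\eq{N_t}{\M}$ for some $t\in T_N\setminus F'$, forcing $N_s\subseteq N_t$, or $\eq{N_s}{\M}\le\bigvee_{t\in F'}\eq{N_t}{\M}$, forcing $N_s\subseteq\bigcup_{t\in F'}N_t$ by clopenness. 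In either case $N_s\subseteq\bigcup_{t\in T_N}N_t\subseteq\prescript{\omega}{}{2}\setminus N$, and taking the union over $s\in S_B$ yields $N\subseteq\varphi_+(B)$.

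For $\Partb(\C_\omega)\le_\mathrm{T}\mathbf{N}$, I set $\psi_-(A)=\prescript{\omega}{}{2}\setminus\bigcup_{s\in S_A}N_s$ and $\psi_+(N)=\Set{\eq{N_t}{\M}}{t\in T_N}$. If $\psi_-(A)\subseteq N$, then for every $t\in T_N$ we have $N_t\subseteq\prescript{\omega}{}{2}\setminus\overline{N}\subseteq\bigcup_{s\in S_A}N_s$, so clause~\eqref{refdue} of Lemma~\ref{lemma:ref} produces some $s\in S_A$ with $s\subseteq t$; hence $\eq{N_t}{\M}\le\eq{N_s}{\M}$, which lies below some element of $A$ since $S_A$ refines $A$. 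Thus $\psi_+(N)$ refines $A$. The main subtle point of the whole argument is precisely this appeal to clause~\eqref{refdue}: it is what allows a mere set-theoretic inclusion of nowhere dense sets to be upgraded to genuine refinement of antichains, which in turn makes the stronger reduction to $\Partb(\C_\omega)$, not merely to $\Partb^*(\C_\omega)$, available and so collapses the three relational systems together.
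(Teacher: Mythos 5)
Your proof is correct and takes essentially the same route as the paper: your $\varphi_+$ (and $\psi_-$), sending a maximal antichain to the complement of $\bigcup_{s\in S}N_s$ for an $S$ supplied by Lemma~\ref{lemma:ref}, is exactly the paper's map, and your canonical antichain $\Set{\eq{N_t}{\M}}{t\in T_N}$ is just a concrete instance of the paper's choice of a maximal antichain inside $\Set{\eq{N_t}{\M}}{N_t\cap N=\emptyset}$. In particular, clause~\eqref{refdue} of Lemma~\ref{lemma:ref} plays the same decisive role in both arguments, upgrading the inclusion of nowhere dense sets to genuine refinement and thereby yielding the reduction all the way to $\Partb(\C_\omega)$.
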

\begin{proof} First, we define a function $\varphi_-\colon\nwd(\prescript{\omega}{}{2})\to\Part(\C_\omega)$. Let $X\in\nwd(\prescript{\omega}{}{2})$; then by definition $D_X=\Set{\eq{N_t}{\M}}{N_t\cap X=\emptyset}$ is a dense subset of $\C_\omega$. So we let $\varphi_-(X)$ be any maximal antichain included in $D_X$.

Next, we define a function $\varphi_+\colon\Part(\C_\omega)\to\nwd(\prescript{\omega}{}{2})$. Let $A\in\Part(\C_\omega)$; by Lemma~\ref{lemma:ref} there exists $S\subseteq\prescript{<\omega}{}{2}$ such that:
\begin{itemize}
\item $\Set*{\eq{N_s}{\M}}{s\in S}$ is a maximal antichain refining $A$;
\item for every $t\in\prescript{<\omega}{}{2}$, if $N_t\subseteq\bigcup_{s\in S}N_s$ then there exists $s\in S$ such that $s\subseteq t$.
\end{itemize}
Then we let
\[
\varphi_+(A)=\prescript{\omega}{}{2}\setminus\bigcup_{s\in S}N_s,
\]
which is clearly nowhere dense.

To establish $\langle\nwd(\prescript{\omega}{}{2}),\subseteq,\nwd(\prescript{\omega}{}{2})\rangle\le_\mathrm{T}\Partb^*(\C_\omega)$, we verify that for all $X\in\nwd(\prescript{\omega}{}{2})$ and all $A\in\Part(\C_\omega)$
\[
\varphi_-(X)\le^* A\implies X\subseteq\varphi_+(A).
\]
By definition, $\varphi_+(A)=\prescript{\omega}{}{2}\setminus\bigcup_{s\in S}N_s$ and $\Set{\eq{N_s}{\M}}{s\in S}$ is a maximal antichain refining $A$. Given $s\in S$, there exists a finite subset $F\subseteq\varphi_-(X)$ such that $\eq{N_s}{\M}\le\bigvee F$. But $\varphi_-(X)\subseteq\Set{\eq{N_t}{\M}}{N_t\cap X=\emptyset}$ and therefore $N_s\cap X=\emptyset$.

Secondly, $\Partb^*(\C_\omega)\le_\mathrm{T}\Partb(\C_\omega)$ follows from Theorem~\ref{theorem:density}.

Finally, to prove that $\Partb(\C_\omega)\le_\mathrm{T}\langle\nwd(\prescript{\omega}{}{2}),\subseteq,\nwd(\prescript{\omega}{}{2})\rangle$, we have to show that
\[
\varphi_+(A)\subseteq X\implies A\le\varphi_-(X).
\]
Let $t\in\prescript{<\omega}{}{2}$ be such that $\eq{N_t}{\M}\in\varphi_-(X)$; this implies that
\[
N_t\subseteq\prescript{\omega}{}{2}\setminus X\subseteq\bigcup_{s\in S}N_s.
\]
By the property of $S$, there exists $s\in S$ such that $s\subseteq t$, hence $\eq{N_t}{\M}\le\eq{N_s}{\M}\le a$ for some $a\in A$.
\end{proof}

\begin{corollary}\label{corollary:cohen} $\cof(\M)=\dd(\Partb^*(\C_\omega))$ and $\add(\M)=\bb\bigl({\Partb^*(\C_\omega)}^\sigma\bigr)$.
\end{corollary}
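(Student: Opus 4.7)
The plan is to combine Theorem~\ref{theorem:mainc} with standard identifications of the cardinal invariants of the nowhere dense ideal on $\prescript{\omega}{}{2}$ with those of the meagre ideal. Setting $\mathbf{N}=\langle\nwd(\prescript{\omega}{}{2}),\subseteq,\nwd(\prescript{\omega}{}{2})\rangle$, one application of Remark~\ref{remark:vojtas} to the Galois-Tukey equivalence $\Partb^*(\C_\omega)\equiv_\mathrm{T}\mathbf{N}$ from Theorem~\ref{theorem:mainc} yields $\dd(\Partb^*(\C_\omega))=\dd(\mathbf{N})=\cof(\nwd(\prescript{\omega}{}{2}))$. Applying the third part of Proposition~\ref{proposition:sigma} to both directions of the same equivalence gives $\Partb^*(\C_\omega)^\sigma\equiv_\mathrm{T}\mathbf{N}^\sigma$, whence Remark~\ref{remark:vojtas} again delivers $\bb(\Partb^*(\C_\omega)^\sigma)=\bb(\mathbf{N}^\sigma)$. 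The corollary therefore reduces to the two identities $\cof(\nwd(\prescript{\omega}{}{2}))=\cof(\M)$ and $\bb(\mathbf{N}^\sigma)=\add(\M)$.

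For the bounds $\cof(\M)\le\cof(\nwd(\prescript{\omega}{}{2}))$ and $\bb(\mathbf{N}^\sigma)\le\add(\M)$, I would exhibit a single Galois-Tukey reduction $\langle\M,\subseteq,\M\rangle\le_\mathrm{T}\mathbf{N}^\sigma$ via an encoding of meagre sets into nowhere dense ones. Fix the clopen partition $\prescript{\omega}{}{2}=\{\langle 1^\omega\rangle\}\cup\bigsqcup_n N_{\langle 1^n 0\rangle}$ with homeomorphisms $\iota_n\colon\prescript{\omega}{}{2}\to N_{\langle 1^n 0\rangle}$. For a meagre set $Y=\bigcup_n Y_n$ with $Y_n$ closed nowhere dense, put $\varphi_-(Y)=\{\langle 1^\omega\rangle\}\cup\bigcup_n\iota_n(Y_n)$, which one checks is closed nowhere dense in $\prescript{\omega}{}{2}$; for $g\in\nwd^\omega$, put $\varphi_+(g)=\bigcup_m\iota_m^{-1}\bigl(\bigcup_k g(k)\cap N_{\langle 1^m 0\rangle}\bigr)$, which is meagre. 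If $\varphi_-(Y)\subseteq g(n_0)$ for some $n_0$, then $\iota_m(Y_m)\subseteq g(n_0)\cap N_{\langle 1^m 0\rangle}$ for every $m$, so $Y_m\subseteq\iota_m^{-1}(g(n_0)\cap N_{\langle 1^m 0\rangle})\subseteq\varphi_+(g)$, and hence $Y\subseteq\varphi_+(g)$.

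The main obstacle is the reverse inequalities $\cof(\nwd(\prescript{\omega}{}{2}))\le\cof(\M)$ and $\add(\M)\le\bb(\mathbf{N}^\sigma)$, since the naive reverse reduction via the inclusion $\nwd\subseteq\M$ does not furnish a Tukey connection: the nowhere dense subsets of a fixed meagre set are not in general countably cofinal under inclusion, as witnessed by any countable dense subset of $\prescript{\omega}{}{2}$. For these inequalities I would appeal to the classical combinatorial characterizations of $\cof(\M)$ and $\add(\M)$ due to Bartoszy{\'n}ski (via chopped reals or slaloms), or equivalently to the analogous computations of the cardinal invariants of the nowhere dense ideal on a countable dense-in-itself space carried out by Balcar, Hern{\'a}ndez-Hern{\'a}ndez, and Hru{\v s}{\'a}k.
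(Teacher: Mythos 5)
Your proposal has the same skeleton as the paper's proof: Theorem~\ref{theorem:mainc} together with Proposition~\ref{proposition:sigma} reduces both equalities to the classical correspondence between the nowhere dense ideal on $\prescript{\omega}{}{2}$ and the meagre ideal, namely that $\mathbf{N}^\sigma\equiv_\mathrm{T}\langle\M,\subseteq,\M\rangle$ where $\mathbf{N}=\langle\nwd(\prescript{\omega}{}{2}),\subseteq,\nwd(\prescript{\omega}{}{2})\rangle$. The paper simply cites this equivalence from Fremlin~\cite[Theorem 3B.(b)]{MR1258546}, whereas you prove the direction $\langle\M,\subseteq,\M\rangle\le_\mathrm{T}\mathbf{N}^\sigma$ explicitly; your partition-and-homeomorphism construction is correct (each basic clopen set away from the point $1^\omega$ meets at most one piece $\iota_n(Y_n)$, so $\varphi_-(Y)$ is closed nowhere dense, and the verification of the connection goes through), and you correctly identify why the naive converse via $\nwd\subseteq\M$ fails. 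Deferring the remaining inequalities $\cof(\nwd(\prescript{\omega}{}{2}))\le\cof(\M)$ and $\add(\M)\le\bb(\mathbf{N}^\sigma)$ to the literature is legitimate---this is exactly the other half of Fremlin's theorem, and the paper does the same for both halves---but one of your two suggested sources is off target: the Balcar--Hern{\'a}ndez-Hern{\'a}ndez--Hru{\v s}{\'a}k computations concern the nowhere dense subsets of the \emph{countable} space $\prescript{<\omega}{}{2}$ under the orthogonality relation $\perp^*$ (this is Theorem~\ref{theorem:ker}, used in Section~\ref{section:quattro}), not inclusion of nowhere dense subsets of the Cantor space, so it does not directly deliver what you need here. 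The Bartoszy{\'n}ski-style characterization would work with some additional translation, but the clean reference is Fremlin's theorem as cited in the paper.
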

\begin{proof} This follows from Proposition~\ref{proposition:sigma}, Theorem~\ref{theorem:mainc}, and the theorem, due to Fremlin~\cite[Theorem 3B.(b)]{MR1258546}, that ${\langle\nwd(\prescript{\omega}{}{2}),\subseteq,\nwd(\prescript{\omega}{}{2})\rangle}^\sigma\equiv_\mathrm{T}\langle\M,\subseteq,\M\rangle$.
\end{proof}

It would be interesting to determine $\Partb^*(\B)$ for other c.c.c.\ forcings on the reals. For the random algebra $\B_\omega=\mathcal{B}(\prescript{\omega}{}{2})/\N$, we remark that $\langle\N,\subseteq,\N\rangle\le_\mathrm{T}\Partb^*(\B_\omega)$; this follows from Theorem~\ref{theorem:density} and the fact, again due to Fremlin~\cite{MR1258546}, that ${\langle\B_\omega^+,\ge,\B_\omega^+\rangle}^\sigma\equiv_\mathrm{T}\langle\N,\subseteq,\N\rangle$. In analogy with Corollary~\ref{corollary:cohen}, we may ask:

\begin{question} Is $\cof(\N)=\dd(\Partb^*(\B_\omega))$?
\end{question}

\section{Reaping}\label{section:quattro}

In this section, we investigate the reaping relation, focusing in particular on reduced powers of Boolean algebras.

\begin{definition} Given a Boolean algebra $\B$, the \emph{reaping relation} of $\B$ is defined as
\[
\mathbf{R}(\B)=\langle\B,R,\B^+\rangle,
\]
where $b\mathrel{R}r$ if and only if either $r\le b$ or $r\wedge b=\bbot$.

Let $\rr(\B)=\dd(\mathbf{R}(\B))$ be the \emph{reaping number} of $\B$, also known in the literature~\cite{MR0961402} as the \emph{weak density} of $\B$. In case the algebra $\B$ is atomless, we also define $\sss(\B)=\bb(\mathbf{R}(\B))$ to be the \emph{splitting number} of $\B$. Following common usage, instead of $\rr(\mathcal{P}(\omega)/\mathrm{fin})$ and $\sss(\mathcal{P}(\omega)/\mathrm{fin})$, we write simply $\rr$ and $\sss$ to denote the classic reaping and splitting numbers.
\end{definition}

A result of Monk shows that the reaping and splitting numbers of the Cohen algebra are both countable.

\begin{proposition}[{Monk~\cite[Proposition 26]{MR1877033}}] $\rr(\C_\omega)=\sss(\C_\omega)=\aleph_0$.
\end{proposition}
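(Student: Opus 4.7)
The plan is to exhibit a single countable family, $\mathcal{F}=\Set{\eq{N_t}{\M}}{t\in\prescript{<\omega}{}{2}}\subseteq\C_\omega^+$, as witness for both upper bounds, and to derive the lower bounds $\rr(\C_\omega),\sss(\C_\omega)\ge\aleph_0$ from the atomlessness of $\C_\omega$. The running fact I would invoke throughout is that every $b\in\C_\omega$ has the form $b=\eq{B}{\M}$ for a Borel, hence Baire-measurable, set $B$; so $B\triangle U$ is meagre for some open $U\subseteq\prescript{\omega}{}{2}$, and $b=\bbot$ if and only if $U=\emptyset$.

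For $\rr(\C_\omega)\le\aleph_0$, given $b=\eq{B}{\M}$ write $B\triangle U$ meagre as above. If $U=\emptyset$ then $b=\bbot$ is trivially reaped by every $r\in\mathcal{F}$; otherwise pick any basic clopen $N_t\subseteq U$, observe that $N_t\setminus B\subseteq B\triangle U$ is meagre, and conclude $\eq{N_t}{\M}\le b$.

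For $\sss(\C_\omega)\le\aleph_0$, given $r=\eq{R}{\M}\in\C_\omega^+$ the associated open set $U$ with $R\triangle U$ meagre is non-empty. The one subtle point is to locate a basic clopen $N_t$ properly contained in $U$: take any basic clopen $N_{s_0}\subseteq U$ and set $t=s_0^\frown\langle 0\rangle$, so that $N_t\subseteq N_{s_0}\subseteq U$ and the basic clopen $N_{s_0^\frown\langle 1\rangle}\subseteq U\setminus N_t$ is non-empty. Since both $N_t\setminus R$ and $N_{s_0^\frown\langle 1\rangle}\setminus R$ are contained in $R\triangle U$, hence meagre, we obtain $\eq{R\cap N_t}{\M}=\eq{N_t}{\M}>\bbot$ and $\eq{R\setminus N_t}{\M}\ge\eq{N_{s_0^\frown\langle 1\rangle}}{\M}>\bbot$, so $\eq{N_t}{\M}$ splits $r$.

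For the matching lower bounds, I would argue by atomlessness: given $r_1,\dots,r_n\in\C_\omega^+$, let $a_1,\dots,a_k\in\C_\omega^+$ be the non-zero atoms of the finite subalgebra they generate; atomlessness yields $c_j$ with $\bbot<c_j<a_j$ for each $j$, and then $b=\bigvee_j c_j$ satisfies $\bbot<r_i\wedge b<r_i$ for every $i$, so no $r_i$ reaps $b$. A dual argument shows that no finite subfamily is splitting. The only non-routine step is producing the properly-contained $N_t$ in the splitting argument; once that strict inclusion is in hand, both verifications reduce to the Baire property combined with the fact that non-empty basic clopens of $\prescript{\omega}{}{2}$ are non-meagre.
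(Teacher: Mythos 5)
Correct. The paper does not prove this proposition at all --- it is quoted directly from Monk --- so there is nothing internal to compare against; but your argument is the standard one and checks out. All four pieces work: the Baire-property computation shows that $\Set{\eq{N_t}{\M}}{t\in\prescript{<\omega}{}{2}}$ is a countable dense subset of $\C_\omega$, which is exactly what makes it simultaneously a reaping (weakly dense) family and, after the $s_0{}^\frown\langle 0\rangle$ versus $s_0{}^\frown\langle 1\rangle$ refinement, a splitting family; the finite-subalgebra argument via atomlessness rules out finite reaping families; and the dual argument (a nonzero atom of the finite subalgebra generated by $b_1,\dots,b_n$ is reaped by every $b_i$, hence split by none) rules out finite splitting families. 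One cosmetic remark: for the splitting lower bound you do not even need atomlessness of $\C_\omega$, only that it is nontrivial, since any atom of the finite subalgebra works; atomlessness is what makes $\sss(\C_\omega)$ well-defined in the first place.
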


Reduced powers of Boolean algebras have been studied, e.g.\ by Brendle~\cite{MR2412898} and more recently by Kurili{\'c}~\cite{MR4874856}. We recall some relevant terminology. Given a Boolean algebra $\B$, the infinite product $\prescript{\omega}{}{\B}$ is also a Boolean algebra with the pointwise operations. Let
\[
\mathrm{Fin}=\Set{f\in\prescript{\omega}{}{\B}}{\text{the set }\Set{n<\omega}{f(n)>\bbot}\text{ is finite}},
\]
which is easily seen to be an ideal on $\prescript{\omega}{}{\B}$. The \emph{reduced power} of $\B$ is the quotient algebra $\prescript{\omega}{}{\B}/\mathrm{Fin}$. 

Balcar and Hru{\v s}{\'a}k~\cite[Proposition 2.1]{MR2130854} showed that $\mathcal{P}(\omega)/\mathrm{fin}$ can be completely embedded into $\prescript{\omega}{}{\B}/\mathrm{Fin}$ for every Boolean algebra $\B$. In the next lemma, we elaborate on this idea to derive a consequence for the reaping relation.

\begin{proposition}\label{proposition:reap} If $\B$ is a Boolean algebra, then $\mathbf{R}(\mathcal{P}(\omega)/\mathrm{fin})\le_\mathrm{T}\mathbf{R}(\prescript{\omega}{}{\B}/\mathrm{Fin})$.
\end{proposition}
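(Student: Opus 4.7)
The plan is to realise the Galois-Tukey connection via the canonical complete embedding of $\mathcal{P}(\omega)/\mathrm{fin}$ into $\prescript{\omega}{}{\B}/\mathrm{Fin}$ recalled immediately before the statement. To each $X\subseteq\omega$ I associate its characteristic function $\chi_X\colon\omega\to\B$, defined by $\chi_X(n)=\btop$ if $n\in X$ and $\chi_X(n)=\bbot$ otherwise; in the opposite direction, to each $r\in\prescript{\omega}{}{\B}$ I associate its support. Concretely, I set
\[
\varphi_-\colon\mathcal{P}(\omega)/\mathrm{fin}\to\prescript{\omega}{}{\B}/\mathrm{Fin},\quad\varphi_-([X])=[\chi_X],
\]
and
\[
\varphi_+\colon(\prescript{\omega}{}{\B}/\mathrm{Fin})^+\to(\mathcal{P}(\omega)/\mathrm{fin})^+,\quad\varphi_+([r])=\bigl[\Set{n<\omega}{r(n)>\bbot}\bigr].
\]
Both maps are well-defined: the support of $\chi_X\bigtriangleup\chi_{X'}$ equals $X\bigtriangleup X'$, so $\varphi_-$ descends to the quotient, and if two elements of $\prescript{\omega}{}{\B}$ agree modulo $\mathrm{Fin}$ then their supports differ in a finite set. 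Moreover, $[r]\ne\bbot$ in $\prescript{\omega}{}{\B}/\mathrm{Fin}$ is precisely the assertion that the support of $r$ is infinite, which guarantees that $\varphi_+([r])\in(\mathcal{P}(\omega)/\mathrm{fin})^+$.

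To verify the Galois-Tukey condition, I would fix $[X]\in\mathcal{P}(\omega)/\mathrm{fin}$ and $[r]\in(\prescript{\omega}{}{\B}/\mathrm{Fin})^+$, write $Y=\Set{n<\omega}{r(n)>\bbot}$, and assume $[\chi_X]\mathrel{R}[r]$. In the first case, $[r]\le[\chi_X]$ means that the set $\Set{n<\omega}{r(n)\wedge\neg\chi_X(n)>\bbot}$ is finite; since $\neg\chi_X(n)=\btop$ exactly when $n\notin X$, this set equals $Y\setminus X$, giving $[Y]\le[X]$. In the second case, $[r]\wedge[\chi_X]=\bbot$ means that $\Set{n<\omega}{r(n)\wedge\chi_X(n)>\bbot}$ is finite; since $\chi_X(n)=\btop$ exactly when $n\in X$, this set equals $Y\cap X$, giving $[Y]\wedge[X]=\bbot$. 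Either way $[X]\mathrel{R}[Y]$, which is $[X]\mathrel{R}\varphi_+([r])$, as required.

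No serious obstacle is anticipated: the construction is essentially dictated by the canonical embedding, and the two cases of the reaping relation are mirror images of one another. The only mild care required is in tracking that the pointwise Boolean operations on $\prescript{\omega}{}{\B}$ descend correctly to the quotient, so that the computations performed on representatives genuinely reflect the ordering and meet in $\prescript{\omega}{}{\B}/\mathrm{Fin}$; this is routine bookkeeping with the ideal $\mathrm{Fin}$.
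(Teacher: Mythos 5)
Your proof is correct and follows essentially the same route as the paper: the same characteristic-function map $\varphi_-$ and the same support map $\varphi_+$, with the Galois--Tukey condition checked on representatives. Your write-up is merely more explicit about the two cases of the reaping relation, which the paper compresses into the single observation that $\chi_X\wedge f\in\mathrm{Fin}$ implies $\Set{n\in X}{f(n)>\bbot}$ is finite.
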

\begin{proof} We define a homomorphism $\varphi_-\colon\mathcal{P}(\omega)/\mathrm{fin}\to\prescript{\omega}{}{\B}/\mathrm{Fin}$ as follows: for each $X\subseteq\omega$, let $\chi_X\colon\omega\to\B$ be the characteristic function of $X$, that is,
\[
\chi_X(n)=
\begin{cases}
\btop & \text{if }n\in X \\
\bbot & \text{if }n\notin X
\end{cases}\, .
\]
Then we let $\varphi_-(\eq{X}{\mathrm{fin}})=\eq{\chi_X}{\mathrm{Fin}}$. On the other hand, let $\varphi_+\colon{(\prescript{\omega}{}{\B}/\mathrm{Fin})}^+\to{(\mathcal{P}(\omega)/\mathrm{fin})}^+$ be defined as follows: if $f\in\prescript{\omega}{}{\B}\setminus\mathrm{Fin}$, then $\varphi_+(\eq{f}{\mathrm{Fin}})=\eq{\Set{n<\omega}{f(n)>\bbot}}{\mathrm{fin}}$.

For all $X\subseteq\omega$ and $f\in\prescript{\omega}{}{\B}\setminus\mathrm{Fin}$, we have
\[
\chi_X\wedge f\in\mathrm{Fin}\implies\Set{n\in X}{f(n)>\bbot}\in\mathrm{fin},
\]
which easily implies that $\varphi_-$ and $\varphi_+$ give a generalized Galois-Tukey connection from $\mathbf{R}(\mathcal{P}(\omega)/\mathrm{fin})$ to $\mathbf{R}(\prescript{\omega}{}{\B}/\mathrm{Fin})$.
\end{proof}

Next, we establish a general upper bound which involves the almost refinement relation discussed in the previous section.

\begin{theorem}\label{theorem:bound} If $\B$ is a c.c.c.\ Boolean algebra, then
\[
\mathbf{R}(\prescript{\omega}{}{\B}/\mathrm{Fin})\le_\mathrm{T}\Partb^*(\B)\mathbin{;}\mathbf{R}(\mathcal{P}(\omega)/\mathrm{fin}).
\]
\end{theorem}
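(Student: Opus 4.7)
The plan is to exhibit maps $\varphi_-\colon\prescript{\omega}{}{\B}/\mathrm{Fin}\to\Part(\B)\times\prescript{\Part(\B)}{}{\mathcal{P}(\omega)/\mathrm{fin}}$ and $\varphi_+\colon\Part(\B)\times{(\mathcal{P}(\omega)/\mathrm{fin})}^+\to{(\prescript{\omega}{}{\B}/\mathrm{Fin})}^+$ constituting a generalized Galois-Tukey connection. Since $\B$ is c.c.c., every $B\in\Part(\B)$ is countable, so using the axiom of choice I fix once and for all a bijective enumeration $B=\{b_k^B : k<\omega\}$ for each such $B$.

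Given $g\in\prescript{\omega}{}{\B}\setminus\mathrm{Fin}$, I first construct $A_g\in\Part(\B)$ together with an enumeration $A_g=\{a_j : j<\omega\}$ such that, for every $j<\omega$ and every $n\le j$, either $a_j\le g(n)$ or $a_j\wedge g(n)=\bbot$. This is achievable by a Zorn-type bookkeeping argument, using the density fact that for any $c\in\B^+$ and any finite $I\subseteq\omega$ one can find a nonzero $c'\le c$ comparable (below or disjoint) with $g(n)$ for every $n\in I$, obtained by iteratively meeting $c$ with $g(n)$ or $\neg g(n)$. I then define $f_g(B)=\eq{\{k<\omega : b_k^B\le g(k)\}}{\mathrm{fin}}$ and set $\varphi_-(\eq{g}{\mathrm{Fin}})=(A_g,f_g)$. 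For $\varphi_+(B,\eq{X}{\mathrm{fin}})$, I return the equivalence class in ${(\prescript{\omega}{}{\B}/\mathrm{Fin})}^+$ of the function sending $n\in X$ to $b_n^B$ and $n\notin X$ to $\bbot$; since $X$ is infinite and each $b_n^B>\bbot$, this lies in ${(\prescript{\omega}{}{\B}/\mathrm{Fin})}^+$ as required.

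To verify the Tukey condition, suppose $A_g\le^*B$ via a finite exception $F\subseteq A_g$ and $\eq{X}{\mathrm{fin}}$ reaps $f_g(B)$. The heart of the argument is the \emph{key claim}: for all but finitely many $k<\omega$, the element $b_k^B$ is comparable with $g(k)$. Granting the claim, the two reaping cases yield the desired conclusion: if $X\subseteq^*f_g(B)$ then $b_n^B\le g(n)$ for a.e.\ $n\in X$, whence $\varphi_+(B,\eq{X}{\mathrm{fin}})\le\eq{g}{\mathrm{Fin}}$; and if $X\cap f_g(B)\subseteq^*\emptyset$ then $b_n^B\not\le g(n)$, and by comparability $b_n^B\wedge g(n)=\bbot$, for a.e.\ $n\in X$, whence $\varphi_+(B,\eq{X}{\mathrm{fin}})\wedge\eq{g}{\mathrm{Fin}}=\bbot$. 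Towards the key claim, each $b\in B$ with $b\not\le\bigvee F$ lies below some $a_j\in A_g\setminus F$ and hence inherits comparability with $g(k)$ for every $k\le j$, so it suffices to arrange that for a.e.\ $k$ the parent index $j$ of $b_k^B$ satisfies $j\ge k$.

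The hardest step is this key claim, and two complications stand in the way. First, the enumeration of $B$ is fixed independently of $g$, while the depth hierarchy of $A_g$ is $g$-dependent, so the enumeration need not match $A_g$'s indexing. Second, those elements $b\in B$ lying below $\bigvee F$ may straddle multiple elements of $F$ and therefore fail to inherit any clean comparability with $g$. Resolving these issues will likely require a more refined construction of $A_g$ (sensitive to the fixed enumeration of partitions), or exploiting the flexibility of $f_g(B)$ to encode a $g$-sensitive reindexing of $B$ compatible with the $g$-independent enumeration used by $\varphi_+$.
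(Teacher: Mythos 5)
Your overall architecture is the right one and matches the paper's: encode $g$ as a maximal antichain $A_g$, record in $\mathcal{P}(\omega)/\mathrm{fin}$ where chosen representatives of $B$ fall below $g$, and let $\varphi_+$ assemble a function from $B$ and $X$. But the proof as written has a genuine gap, which you yourself flag at the end: the \emph{key claim} is false for the maps you define. The problem is exactly the misalignment you identify between the a priori enumeration $b^B_k$ of $B$ and the $g$-dependent structure of $A_g$. From $A_g\le^* B$ you only learn that each $b^B_k$ not swallowed by $\bigvee F$ lies below some $a_j$ with $j$ bearing no relation to $k$; such a $b^B_k$ is then comparable with $g(n)$ only for $n\le j$, and if $j<k$ nothing follows about $g(k)$. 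Worse, infinitely many elements of $B$ may lie below $\bigvee F$ without lying below any single element of $F$, and these inherit no comparability with any $g(n)$ at all. So ``$b^B_k$ is comparable with $g(k)$ for almost all $k$'' cannot be recovered from your construction, and the verification collapses.

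The paper repairs precisely this by aligning both sides against one maximal antichain $\Set{a_n}{n<\omega}$ fixed in advance, independently of $g$ and of $B$. One sets $A_g=\bigcup_{n<\omega}\{a_n\wedge g(n),\,a_n\wedge\neg g(n)\}\setminus\{\bbot\}$, so the ``depth hierarchy'' is flat: the part of $A_g$ under $a_n$ decides exactly $g(n)$. Instead of an arbitrary enumeration of $B$, one chooses for each $n$ a representative $B(n)\in B$ with $a_n\wedge B(n)>\bbot$, and uses these same representatives both in the map into $\mathcal{P}(\omega)/\mathrm{fin}$ and in $\varphi_+$. If $F\subseteq A_g$ is the finite exceptional set and $k$ is large enough that every element of $F$ lies below some $a_m$ with $m<k$, then for $n\ge k$ the element $B(n)$ cannot sit below $\bigvee F$ (it meets $a_n$), hence sits below some member of $A_g\setminus F$ meeting $a_n$, which must be $a_n\wedge g(n)$ or $a_n\wedge\neg g(n)$; this gives the dichotomy at the correct index $n$. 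Your final reaping case analysis then goes through verbatim. A minor further point: the case of finite $\B$ needs separate treatment (your bijective enumerations of partitions do not exist there); the paper handles it by noting that $\prescript{\omega}{}{\B}/\mathrm{Fin}$ is then isomorphic to $\mathcal{P}(\omega)/\mathrm{fin}$.
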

\begin{proof} If $\B$ is finite, then $\B=\mathcal{P}(n)$ for some $n<\omega$ and therefore $\prescript{\omega}{}{\B}/\mathrm{Fin}$ is isomorphic to $\mathcal{P}(\omega)/\mathrm{fin}$. It follows that $\mathbf{R}(\prescript{\omega}{}{\B}/\mathrm{Fin})\equiv_\mathrm{T}\mathbf{R}(\mathcal{P}(\omega)/\mathrm{fin})$, from which the conclusion is immediate. If $\B$ is infinite, let us fix a maximal antichain $\Set{a_n}{n<\omega}$ in $\B$. Furthermore, for every $B\in\Part(\B)$ and $n<\omega$, choose $B(n)\in B$ such that $a_n\wedge B(n)>\bbot$.

Given $g\colon\omega\to\B$, we define a maximal antichain
\[
A_g=\bigcup_{n<\omega}\{a_n\wedge g(n),a_n\wedge\neg g(n)\}\setminus\{\bbot\}
\]
and a function
\[
\begin{split}
\beta_g\colon\Part(\B)&\longrightarrow\mathcal{P}(\omega)/\mathrm{fin}\\
B &\longmapsto\eq{\Set{n<\omega}{B(n)\le g(n)}}{\mathrm{fin}}
\end{split}\, ;
\]
then, we let
\[
\begin{split}
\varphi_-\colon\prescript{\omega}{}{\B}/\mathrm{Fin}&\longrightarrow\Part(\B)\times\prescript{\Part(\B)}{}{\mathcal{P}(\omega)/\mathrm{fin}}\\
\eq{g}{\mathrm{Fin}} &\longmapsto\langle A_g,\beta_g\bigr\rangle
\end{split}\, .
\]

On the other hand, given $B\in\Part(\B)$ and an infinite $X\subseteq\omega$, we define a function $f_{B,X}\colon\omega\to\B$ such that for each $n<\omega$
\[
f_{B,X}(n)=
\begin{cases}
B(n) & \text{if }n\in X \\
\bbot & \text{if }n\notin X
\end{cases}\, ;
\]
then, we let
\[
\begin{split}
\varphi_+\colon\Part(\B)\times{(\mathcal{P}(\omega)/\mathrm{fin})}^+&\longrightarrow{(\prescript{\omega}{}{\B}/\mathrm{Fin})}^+\\
\langle B,\eq{X}{\mathrm{fin}}\rangle &\longmapsto\eq{f_{B,X}}{\mathrm{Fin}}
\end{split}\, .
\]

To show that $\varphi_-$ and $\varphi_+$ form indeed a generalized Galois-Tukey connection, we prove that for all $g\colon\omega\to\B$, $B\in\Part(\B)$, and infinite $X\subseteq\omega$
\[
A_g\le^*B\text{ and }\beta_g(B)\mathrel{R}\eq{X}{\mathrm{fin}}\implies\eq{g}{\mathrm{Fin}}\mathrel{R}\eq{f_{B,X}}{\mathrm{Fin}}.
\]
Since $A_g\le^*B$, there exists a finite subset $F\subset A_g$ such that $A_g^F\le B$. Let $k<\omega$ be sufficiently large that for every $a\in F$ there exists $n<k$ such that $a\le a_n$. Then it is clear that
\begin{equation}\label{eq:gtbb}
(\forall n\ge k)(B(n)\le g(n)\text{ or }B(n)\wedge g(n)=\bbot).
\end{equation}
Since $\beta_g(B)\mathrel{R}\eq{X}{\mathrm{fin}}$, we have two possibilities: if $\eq{X}{\mathrm{fin}}\le\beta_g(B)$, then $\eq{f_{B,X}}{\mathrm{Fin}}\le\eq{g}{\mathrm{Fin}}$ and we are already done. If not, then the set $\Set{n\in X}{B(n)\le g(n)}$ is finite which, combined with \eqref{eq:gtbb}, yields that the set $\Set{n\in X}{B(n)\wedge g(n)>\bbot}$ is also finite. Therefore $\eq{f_{B,X}}{\mathrm{Fin}}\wedge\eq{g}{\mathrm{Fin}}=\bbot$, as desired.
\end{proof}

For the remainder of this section, we shall be concerned with the topological space of the rationals which, for convenience, we identify with the binary tree $\prescript{<\omega}{}{2}$. More precisely, it is possible to define a dense linear ordering $<$ on $\prescript{<\omega}{}{2}$ as follows: $s<t$ if and only if $t\subset s$ and $s(\abs{t})=0$; or $s\subset t$ and $t(\abs{s})=1$; or $s$ and $t$ are incompatible and $s(k)<t(k)$, where $k$ is their first point of difference. In the space $\langle\prescript{<\omega}{}{2},<\rangle$ equipped with the order topology, we have that:
\begin{itemize}
\item $D\subseteq\prescript{<\omega}{}{2}$ is dense if and only if for all $s\in\prescript{<\omega}{}{2}$ there exists $t\in D$ such that $s\subseteq t$;
\item $N\subseteq\prescript{<\omega}{}{2}$ is nowhere dense if and only if there exists a maximal antichain $A\subseteq\prescript{<\omega}{}{2}$ such that for all $s\in N$ there exists $t\in A$ such that $s\subseteq t$.
\end{itemize}
For further details, we refer the reader to \cite[Fact 1.1]{MR2098150}.

\begin{definition}\label{definition:rel} Let $\Dense(\prescript{<\omega}{}{2})$ be the set of dense subsets of $\prescript{<\omega}{}{2}$ and let $\nwd(\prescript{<\omega}{}{2})$ be the ideal of nowhere dense subsets of $\prescript{<\omega}{}{2}$. We define the relational system
\[
\mathbf{D}(\prescript{<\omega}{}{2})=\langle\nwd(\prescript{<\omega}{}{2}),\perp^*,\Dense(\prescript{<\omega}{}{2})\rangle,
\]
where $N\perp^*D$ if and only if $N\cap D$ is finite.
\end{definition}

The bounding number of $\mathbf{D}(\prescript{<\omega}{}{2})$ has been computed by Keremedis~\cite{MR1234629}; nine years later, Balcar, Hern{\'a}ndez-Hern{\'a}ndez, and Hru{\v s}{\'a}k~\cite{MR2098150} computed the dominating number of $\mathbf{D}(\prescript{<\omega}{}{2})$. We summarize both in the next theorem.

\begin{theorem}[{Keremedis~\cite[Theorem 2]{MR1234629}; Balcar, Hern{\'a}ndez-Hern{\'a}ndez, and Hru{\v s}{\'a}k~\cite[Theorem 1.6]{MR2098150}}]\label{theorem:ker} $\cof(\M)=\dd(\mathbf{D}(\prescript{<\omega}{}{2}))$ and $\add(\M)=\bb(\mathbf{D}(\prescript{<\omega}{}{2}))$.
\end{theorem}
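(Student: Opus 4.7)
The plan is to establish the Galois-Tukey equivalence $\mathbf{D}(\prescript{<\omega}{}{2})\equiv_{\mathrm{T}}\langle\M,\subseteq,\M\rangle$; both equalities in the theorem then follow at once from Remark~\ref{remark:vojtas} together with the fact that $\dd(\langle\M,\subseteq,\M\rangle)=\cof(\M)$ and $\bb(\langle\M,\subseteq,\M\rangle)=\add(\M)$ by definition.

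For the reduction $\langle\M,\subseteq,\M\rangle\le_{\mathrm{T}}\mathbf{D}(\prescript{<\omega}{}{2})$, the natural right-side map sends a dense $D\subseteq\prescript{<\omega}{}{2}$ to the meager $F_\sigma$ set $\psi_+(D)=\prescript{\omega}{}{2}\setminus U_D$, where $U_D=\Set{x\in\prescript{\omega}{}{2}}{x\restriction n\in D\text{ for infinitely many }n}$; tree-density of $D$ guarantees that $U_D$ is a dense $G_\delta$. The left-side map $\psi_-$ takes a meager $M=\bigcup_nF_n$ with $F_n\subseteq F_{n+1}$ closed nowhere dense, to a nowhere dense subset $\psi_-(M)\subseteq\prescript{<\omega}{}{2}$ built by extracting, at carefully increasing depths $L_n$, finite portions of the trees $T_{F_n}=\Set{s}{N_s\cap F_n\ne\emptyset}$; the depths are chosen so that $\psi_-(M)$ lies below a maximal tree-antichain consisting of $F_n$-avoiders. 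If $\psi_-(M)\cap D$ is finite then $D$ must, past a sufficiently deep level, avoid the witnessing prefixes of each $F_n$, forcing every $x\in M$ to fall outside $U_D$, whence $M\subseteq\psi_+(D)$.

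For the converse direction $\mathbf{D}(\prescript{<\omega}{}{2})\le_{\mathrm{T}}\langle\M,\subseteq,\M\rangle$, given a meager $M=\bigcup_nF_n$ as above, define $\varphi_+(M)=\bigcup_nA_n$ where $A_n$ is a maximal antichain of tree-nodes of length at least $L_n$ (for a suitable increasing sequence $L_n$) whose clopen cones $N_s$ miss $F_n$; such $A_n$ exist by the nowhere-denseness of $F_n$, and density of $\varphi_+(M)$ follows by iterating. For a nowhere dense $N\in\nwd(\prescript{<\omega}{}{2})$ with ceiling antichain $A_N$, define $\varphi_-(N)=\Set{t^\frown c_0}{t\in A_N}\subseteq\prescript{\omega}{}{2}$, where $c_0$ denotes the identically-zero branch; this set is countable, closed (since every $x$ in its closure passes through some unique $t\in A_N$), and nowhere dense (for any $t\in A_N$, the clopen cone $N_{t^\frown\langle 1\rangle}$ misses it), hence meager. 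Assuming $\varphi_-(N)\subseteq M$, to each $t\in A_N$ is associated an index $n(t)$ with $t^\frown c_0\in F_{n(t)}$; an element $s\in N\cap\varphi_+(M)$ must be a prefix of some $t\in A_N$ and lie in some $A_n$ with $n<n(t)$, and the stratification-by-level of the $A_n$ combined with the ceiling structure of $N$ is calibrated so that only finitely many such $s$ can occur.

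The main obstacle is the fine combinatorial coordination between the $\sigma$-structure of meagerness in Cantor space and the single-antichain characterization of nowhere denseness in the tree: the choice of levels $L_n$ on both sides must be arranged so that a set-theoretic containment of meager sets translates into the \emph{finite}-intersection conclusion of $\perp^*$, rather than merely an eventual-containment. This calibration, especially ensuring that each $t\in A_N$ contributes only finitely many prefixes to $N\cap\varphi_+(M)$ uniformly over all $t$, is the essential combinatorial content of the theorem.
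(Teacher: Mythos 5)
The paper itself does not prove this statement---it is quoted from Keremedis and from Balcar--Hern\'andez-Hern\'andez--Hru\v s\'ak---so I can only assess your proposal on its own terms. Your overall plan (a Galois--Tukey equivalence between $\mathbf{D}(\prescript{<\omega}{}{2})$ and $\langle\M,\subseteq,\M\rangle$) has the right shape, but both concrete connections you propose fail, and the step you yourself flag as ``the essential combinatorial content'' is exactly the part that is missing. For the direction $\langle\M,\subseteq,\M\rangle\le_{\mathrm{T}}\mathbf{D}(\prescript{<\omega}{}{2})$, the choice $\psi_+(D)=\prescript{\omega}{}{2}\setminus U_D$, where $U_D$ is the set of branches with infinitely many initial segments in $D$, cannot be completed by \emph{any} $\psi_-$. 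Take $M$ to be a countable dense subset of $\prescript{\omega}{}{2}$ (hence meagre) and let $N=\psi_-(M)$ be any nowhere dense subset of the tree, so that $E=\Set{s}{\text{no extension of }s\text{ lies in }N}$ is dense. If some $x\in M$ has infinitely many initial segments outside $N$, then $D=E\cup\Set{x\rest n}{x\rest n\notin N}$ is dense and disjoint from $N$, yet $x\in U_D$, so $M\not\subseteq\psi_+(D)$ while $\psi_-(M)\perp^*D$. If instead every $x\in M$ has cofinitely many initial segments inside $N$, then density of $M$ in $\prescript{\omega}{}{2}$ forces every node to have an extension in $N$, contradicting nowhere density. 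So $U_D$ retains too little of $D$; the actual proofs must extract from $D$ finer data (an interval decomposition together with the contents of the chosen extensions, i.e.\ the chopped-real presentation of $\M$), not merely the comeagre set $U_D$.

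The direction $\mathbf{D}(\prescript{<\omega}{}{2})\le_{\mathrm{T}}\langle\M,\subseteq,\M\rangle$ has a symmetric defect: your $\varphi_-(N)=\Set{t^\frown c_0}{t\in A_N}$ depends only on the ceiling antichain and discards $N$ itself. Take $M_0=\Set{x\in\prescript{\omega}{}{2}}{x\text{ is eventually }0}$, which is countable and hence meagre. Every point of the form $t^\frown c_0$ lies in $M_0$, so $\varphi_-(N)\subseteq M_0$ for \emph{every} nowhere dense $N$, and the connection would force $N\cap\varphi_+(M_0)$ to be finite for every nowhere dense $N$. That is impossible: $\varphi_+(M_0)$ is dense, hence infinite, and by Ramsey's theorem any infinite subset of $\prescript{<\omega}{}{2}$ contains an infinite chain or an infinite antichain, either of which is nowhere dense and meets $\varphi_+(M_0)$ in an infinite set. (Your parenthetical claim that $\varphi_-(N)$ is closed is also false---for $A_N=\Set{\langle 0^n1\rangle}{n<\omega}$ the branch $c_0$ is a limit point---though this is harmless, as countable sets are meagre anyway.) The upshot is that the deferred ``calibration of the levels $L_n$'' is not a technicality: with these choices of $\psi_+$ and $\varphi_-$ no calibration can succeed, and a correct argument must encode strictly more of $D$, respectively of $N$, than the objects $U_D$ and $\Set{t^\frown c_0}{t\in A_N}$ retain.
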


Instead of dense sets of rationals, we may equivalently use divergent sequences. More precisely, let
\[
\suc(\prescript{<\omega}{}{2})=\Set*{\Seq{x_i}{i<\omega}\in\prescript{\omega}{}{(\prescript{<\omega}{}{2})}}{(\forall s\in\prescript{<\omega}{}{2})(\exists k<\omega)(\forall i\ge k)(x_i<s)}
\]
and, in analogy with Definition~\ref{definition:rel}, consider the relational system $\langle\nwd(\prescript{<\omega}{}{2}),\perp^*,\allowbreak\suc(\prescript{<\omega}{}{2})\rangle$, where $N\perp^*\Seq{x_i}{i<\omega}$ if and only if $\Set{i<\omega}{x_i\in N}$ is finite. Then, we have the following Galois-Tukey equivalence.

\begin{lemma}\label{lemma:seq} $\langle\nwd(\prescript{<\omega}{}{2}),\perp^*,\suc(\prescript{<\omega}{}{2})\rangle\equiv_\mathrm{T}\mathbf{D}(\prescript{<\omega}{}{2})$.
\end{lemma}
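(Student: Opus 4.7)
The plan is to establish each direction of the equivalence separately, in both cases using the sequence $\Seq{\langle 0^i\rangle}{i<\omega}$ as a canonical divergence witness.

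For $\langle\nwd(\prescript{<\omega}{}{2}),\perp^*,\suc(\prescript{<\omega}{}{2})\rangle\le_\mathrm{T}\mathbf{D}(\prescript{<\omega}{}{2})$, I take $\varphi_-$ to be the identity and define $\varphi_+(D)=\Seq{t_i}{i<\omega}$, where each $t_i$ is, by density, some element of $D$ extending $\langle 0^i\rangle$. The resulting sequence is divergent because, for any $s\in\prescript{<\omega}{}{2}$, once $i$ is large enough, $t_i$ starts with $\langle 0^{|s|+1}\rangle$, and a direct case analysis of the definition of $<$ forces $t_i<s$. Since any divergent sequence must take every value only finitely often (otherwise a value $v$ repeated infinitely often contradicts divergence at $s=v$), the finiteness of $N\cap D$ transfers to that of $\{i:t_i\in N\}$.

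For the converse $\mathbf{D}(\prescript{<\omega}{}{2})\le_\mathrm{T}\langle\nwd(\prescript{<\omega}{}{2}),\perp^*,\suc(\prescript{<\omega}{}{2})\rangle$, the divergence condition applied to $\langle 0^l\rangle$ yields that for every $l$ eventually $x_i\supseteq\langle 0^{l+1}\rangle$. Hence for each $s\in\prescript{<\omega}{}{2}$ the index $i_s=\min\Set{i<\omega}{x_i\supseteq\langle 0^{|s|+1}\rangle}$ is well defined; writing $x_{i_s}=\langle 0^{|s|}\rangle^\frown u_s$, I set $y_s=s^\frown u_s$ and $\psi_+(\Seq{x_i}{i<\omega})=\Set{y_s}{s\in\prescript{<\omega}{}{2}}$, which is dense since each $y_s$ extends $s$. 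The companion map is
\[
\psi_-(N)=\Set*{\langle 0^k\rangle^\frown u}{k<\omega,\ u\in\prescript{<\omega}{}{2},\ \exists\sigma\in\prescript{k}{}{2}\bigl(\sigma^\frown u\in N\bigr)},
\]
engineered so that $y_s\in N$ immediately witnesses $x_{i_s}\in\psi_-(N)$ through $k=|s|$ and $\sigma=s$.

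The main obstacle will be verifying that $\psi_-(N)\in\nwd(\prescript{<\omega}{}{2})$. The plan is first to note that, for each fixed $k$, the set $S_k=\bigcup_{\sigma\in\prescript{k}{}{2}}\Set{u\in\prescript{<\omega}{}{2}}{\sigma^\frown u\in N}$ is nowhere dense, as a finite union of ``shifts'' of $N$. Then, given any $t\in\prescript{<\omega}{}{2}$, after extending $t$ (trivially if $t$ already contains a $1$) so that it has a $1$ at some position $K$, only the values $k\le K$ can contribute to an element of $\psi_-(N)$ above $t$; a recursive construction over $k=0,1,\dots,K$, at each step extending further to avoid $S_k$, produces $t'\supseteq t$ with no extension in $\psi_-(N)$. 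Finally, the Galois--Tukey property follows because $i_s$ depends only on $|s|$ and $l\mapsto i_l$ tends to infinity (since $x_i\supseteq\langle 0^{l+1}\rangle$ requires $|x_i|>l$), so a finite $\{i:x_i\in\psi_-(N)\}$ forces only finitely many lengths $l$ to satisfy $x_{i_l}\in\psi_-(N)$, whence $\{s:y_s\in N\}$ is contained in a finite union of the sets $\prescript{l}{}{2}$ and is itself finite.
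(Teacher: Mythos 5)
Your proposal is correct and follows essentially the same route as the paper: the easy direction is identical (pick $t_i\in D$ extending $\langle 0^i\rangle$), and the harder direction uses the same key device of shifting $N$ by finite strings while guarding with initial segments of zeros, exploiting that divergence of $\Seq{x_i}{i<\omega}$ is equivalent to the $x_i$ eventually extending each $\langle 0^{l+1}\rangle$. The only differences are bookkeeping: you index the shifts by the length $\abs{s}$ (taking all $\sigma\in\prescript{k}{}{2}$ at once) and verify nowhere density by a finite recursion over $k\le K$, whereas the paper indexes by a fixed enumeration of $\prescript{<\omega}{}{2}$ and separates the pieces of $\psi_-(N)$ by the disjoint open cones above $\langle 0^k1\rangle$.
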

\begin{proof} First, it is easy to see that $\langle\nwd(\prescript{<\omega}{}{2}),\perp^*,\suc(\prescript{<\omega}{}{2})\rangle\le_\mathrm{T}\mathbf{D}(\prescript{<\omega}{}{2})$. Indeed, we can construct a pair of functions
\[
\varphi_-\colon\nwd(\prescript{<\omega}{}{2})\to\nwd(\prescript{<\omega}{}{2})\quad\text{and}\quad\varphi_+\colon\Dense(\prescript{<\omega}{}{2})\to\suc(\prescript{<\omega}{}{2})
\]
as follows: let $\varphi_-$ be the identity function. Given $D\in\Dense(\prescript{<\omega}{}{2})$, for each $i<\omega$ choose $x_i\in D$ such that $\langle 0^i\rangle\subseteq x_i$ and define $\varphi_+(D)=\Seq{x_i}{i<\omega}$, which belongs to $\suc(\prescript{<\omega}{}{2})$ by construction. If $N\cap D$ is finite, then $\Set{i<\omega}{x_i\in N}$ is finite, as we wanted.

Secondly, we prove that $\mathbf{D}(\prescript{<\omega}{}{2})\le_\mathrm{T}\langle\nwd(\prescript{<\omega}{}{2}),\perp^*,\suc(\prescript{<\omega}{}{2})\rangle$. Let $\prescript{<\omega}{}{2}=\Set{s_n}{n<\omega}$ be a fixed enumeration of the rationals and define a function
\[
\begin{split}
\psi_-\colon\nwd(\prescript{<\omega}{}{2})&\longrightarrow\nwd(\prescript{<\omega}{}{2})\\
N &\longmapsto\bigcup_{n<\omega}\Set*{s\in\prescript{<\omega}{}{2}}{\langle 0^n\rangle\subseteq s\text{ and }s_n^\frown s\in N}
\end{split}\, .
\]
To see that $\psi_-(N)$ is indeed nowhere dense, first note that for each $k<\omega$ the set
\[
\psi_-(N)\cap\Set*{s\in\prescript{<\omega}{}{2}}{\langle 0^k1\rangle\subseteq s}=\bigcup_{n\le k}\Set*{s\in\prescript{<\omega}{}{2}}{\langle 0^k1\rangle\subseteq s\text{ and }s_n^\frown s\in N}
\]
is nowhere dense, being a finite union of nowhere dense sets. It follows that $\psi_-(N)\cap\bigcup_{k<\omega}\Set*{s\in\prescript{<\omega}{}{2}}{\langle 0^k1\rangle\subseteq s}$ is also nowhere dense, being a union of nowhere dense sets separated by disjoint open sets. But since $\bigcup_{k<\omega}\Set*{s\in\prescript{<\omega}{}{2}}{\langle 0^k1\rangle\subseteq s}$ is open dense in $\prescript{<\omega}{}{2}$, we conclude that $\psi_-(N)$ itself is nowhere dense.

On the other hand, we define
\[
\begin{split}
\psi_+\colon\suc(\prescript{<\omega}{}{2})&\longrightarrow\Dense(\prescript{<\omega}{}{2})\\
\Seq{x_i}{i<\omega}&\longmapsto\bigcup_{n<\omega}\Set{s_n^\frown x_i}{i<\omega\text{ and }\langle 0^n\rangle\subseteq x_i}
\end{split}\, .
\]
We observe that $\psi_+(\Seq{x_i}{i<\omega})$ is dense because, for all $n<\omega$, there exists $i<\omega$ such that $\langle 0^n\rangle\subseteq x_i$; hence $s_n^\frown x_i\in\psi_+(\Seq{x_i}{i<\omega})$ and obviously $s_n\subseteq s_n^\frown x_i$.

The proof is complete once we show that, for all $N\in\nwd(\prescript{<\omega}{}{2})$ and $\Seq{x_i}{i<\omega}\in\suc(\prescript{<\omega}{}{2})$,
\begin{equation}\label{eq:lhs}
\Set{i<\omega}{x_i\in\psi_-(N)}\text{ is finite}\implies N\cap\psi_+(\Seq{x_i}{i<\omega})\text{ is finite}.
\end{equation}
If the set on the left-hand side of \eqref{eq:lhs} is finite, then in particular for each $n<\omega$ the set $N\cap\Set{s_n^\frown x_i}{i<\omega\text{ and }\langle 0^n\rangle\subseteq x_i}$ must be finite, for otherwise there would exist infinitely many $i<\omega$ such that $\langle 0^n\rangle\subseteq x_i$ and $s_n^\frown x_i\in N$, which results in $x_i\in\psi_-(N)$, a contradiction. Now, let $k<\omega$ be sufficiently large that for all $i<\omega$, if $x_i\in\psi_-(N)$ then $\abs{x_i}\le k$. It follows that
\[
N\cap\psi_+(\Seq{x_i}{i<\omega})=\bigcup_{n\le k}N\cap\Set{s_n^\frown x_i}{i<\omega\text{ and }\langle 0^n\rangle\subseteq x_i},
\]
hence the set on the right-hand side of \eqref{eq:lhs} is finite, being a finite union of finite sets.
\end{proof}

The next theorem, together with Proposition~\ref{proposition:reap}, essentially determines the reaping relation of the reduced power of $\C_\omega$.

\begin{theorem}\label{theorem:reap} $\mathbf{D}(\prescript{<\omega}{}{2})\le_\mathrm{T}\mathbf{R}(\prescript{\omega}{}{\C_\omega}/\mathrm{Fin})\le_\mathrm{T}\mathbf{D}(\prescript{<\omega}{}{2})\mathbin{;}\mathbf{R}(\mathcal{P}(\omega)/\mathrm{fin})$.
\end{theorem}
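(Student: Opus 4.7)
The theorem consists of two inequalities, which I would prove independently.

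For the second inequality $\mathbf{R}(\prescript{\omega}{}{\C_\omega}/\mathrm{Fin}) \le_\mathrm{T} \mathbf{D}(\prescript{<\omega}{}{2}) \mathbin{;} \mathbf{R}(\mathcal{P}(\omega)/\mathrm{fin})$, the plan is to specialize the construction of Theorem~\ref{theorem:bound} to the Cohen algebra, using the fixed maximal antichain $\{a_n = [N_{\langle 0^n 1\rangle}]_\M : n < \omega\}$ in $\C_\omega$. Given $g\colon\omega\to\C_\omega$, I would form $A_g = \bigcup_n \{a_n \wedge g(n), a_n \wedge \neg g(n)\} \setminus \{\bbot\}$ as in Theorem~\ref{theorem:bound}, apply Lemma~\ref{lemma:ref} to extract $S_g \subseteq \prescript{<\omega}{}{2}$ such that $\{[N_s]_\M : s \in S_g\}$ is a maximal antichain refining $A_g$, and set $N_g = \Set{t \in \prescript{<\omega}{}{2}}{t \subseteq s \text{ for some } s \in S_g}$, which is nowhere dense in $\prescript{<\omega}{}{2}$ by the Balcar--Hern\'andez-Hern\'andez--Hru\v{s}\'ak characterization recalled before Definition~\ref{definition:rel}. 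For the second coordinate, I would define $\beta_g(D) = [\Set{n < \omega}{[N_{D(n)}]_\M \le g(n)}]_\mathrm{fin}$, where $D(n) \in D$ is a choice of element extending $\langle 0^n 1\rangle$, and then put $\varphi_-([g]) = \langle N_g, \beta_g\rangle$. The inverse map is $\varphi_+(\langle D, [X]\rangle) = [f_{D,X}]$ with $f_{D,X}(n) = [N_{D(n)}]_\M$ for $n \in X$ and $\bbot$ otherwise. The verification parallels Theorem~\ref{theorem:bound}: when $D(n) \notin N_g$, clause~(2) of Lemma~\ref{lemma:ref} supplies $s \in S_g$ with $s \subseteq D(n)$; since $s$ extends a unique $\langle 0^m 1\rangle$ and $\langle 0^n 1\rangle \subseteq D(n)$, the fixed antichain structure forces $m = n$, so $[N_{D(n)}]_\M$ refines either $a_n \wedge g(n)$ or $a_n \wedge \neg g(n)$, after which $\beta_g(D) \mathrel{R} [X]$ selects the appropriate side.

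For the first inequality $\mathbf{D}(\prescript{<\omega}{}{2}) \le_\mathrm{T} \mathbf{R}(\prescript{\omega}{}{\C_\omega}/\mathrm{Fin})$, I would replace the left-hand side by $\langle\nwd(\prescript{<\omega}{}{2}), \perp^*, \suc(\prescript{<\omega}{}{2})\rangle$ via Lemma~\ref{lemma:seq} and apply the Balcar--Hern\'andez-Hern\'andez--Hru\v{s}\'ak characterization in reverse. Given an infinite $N \in \nwd(\prescript{<\omega}{}{2})$, fix a maximal antichain $A_N \subseteq \prescript{<\omega}{}{2}$ such that every $s \in N$ is an initial segment of some $t \in A_N$, enumerate $A_N = \{t_n : n < \omega\}$ with non-decreasing lengths, and set $g_N(n) = [N_{t_n}]_\M$; note that $\{[N_{t_n}]_\M : n < \omega\}$ is itself a maximal antichain in $\C_\omega$. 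On the other side, given $[f] \in (\prescript{\omega}{}{\C_\omega}/\mathrm{Fin})^+$, I would construct $\Seq{x_i}{i < \omega}$ by choosing, for each $i$, an element $x_i$ that strictly extends $\langle 0^i 1\rangle$ to sufficient depth and satisfies $[N_{x_i}]_\M \le f(n_i)$ for some index $n_i$; the constraint $x_i \supseteq \langle 0^i 1\rangle$ automatically places $\Seq{x_i}{i < \omega}$ in $\suc(\prescript{<\omega}{}{2})$.

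The Galois--Tukey verification then splits on the two disjuncts of $[g_N] \mathrel{R} [f]$. When $[f] \le [g_N]$, the chain $[N_{x_i}]_\M \le f(n_i) \le [N_{t_{n_i}}]_\M$ forces $x_i \supseteq t_{n_i}$, and calibrating the depth of $x_i$ to exceed $|t_{n_i}|$ makes the extension strict, so $x_i$ is a strict extension of an $A_N$-element and hence $x_i \notin N$ by the characterization. When $[f] \wedge [g_N] = \bbot$, the relation $f(n_i) \wedge [N_{t_{n_i}}]_\M = \bbot$ makes $x_i$ tree-incompatible with $t_{n_i}$, but maximality of $A_N$ supplies the unique $t_m \in A_N$ comparable with $x_i$, and the same depth argument places $x_i$ as a strict extension of $t_m$, so again $x_i \notin N$. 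The main technical obstacle is to orchestrate these depth choices uniformly in $i$ using only data from $[f]$: specifically, one must enumerate $A_N$ so that $|t_n|$ grows at a controlled rate relative to $n$ and choose a representative of $[f]$ for which $x_i$ can always be taken deep enough to beat the length of the relevant $A_N$-element, which can be arranged by refining $A_N$ so that it meshes cleanly with the fixed left-spine antichain $\{\langle 0^n 1\rangle : n < \omega\}$.
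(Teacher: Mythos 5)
Your second reduction, $\mathbf{R}(\prescript{\omega}{}{\C_\omega}/\mathrm{Fin})\le_\mathrm{T}\mathbf{D}(\prescript{<\omega}{}{2})\mathbin{;}\mathbf{R}(\mathcal{P}(\omega)/\mathrm{fin})$, is essentially the paper's argument: the paper takes $N_g$ to be the set of all $s$ with $\eq{N_s}{\M}$ below no $a_n\wedge g(n)$ and no $a_n\wedge\neg g(n)$, rather than the downward closure of a refining antichain from Lemma~\ref{lemma:ref}, but the two choices are interchangeable and your verification goes through.

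The first reduction is where the proposal has a genuine gap, in two places. First, in defining $\varphi_+$ you ask that $x_i$ satisfy both $\langle 0^i1\rangle\subseteq x_i$ (for divergence) and $\eq{N_{x_i}}{\M}\le f(n_i)$ (to interact with $f$); these are jointly unsatisfiable in general --- if $f$ is constantly $\eq{N_{\langle 1\rangle}}{\M}$, every admissible $x_i$ must extend $\langle 1\rangle$ and so cannot extend $\langle 0^i1\rangle$ for $i\ge 1$. Second, and more seriously, the verification in the case $\eq{f}{\mathrm{Fin}}\wedge\eq{g_N}{\mathrm{Fin}}=\bbot$ fails: from $f(n_i)\wedge\eq{N_{t_{n_i}}}{\M}=\bbot$ you learn only that $x_i\perp t_{n_i}$ in the tree, and $x_i$ may well be a \emph{proper initial segment} of some other element of $A_N$ (there is no ``unique $t_m$ comparable with $x_i$'': $x_i$ can have many extensions inside $A_N$), in which case nothing prevents $x_i\in N$. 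Your proposed repair --- calibrating $\abs{x_i}$ to exceed the lengths of the relevant elements of $A_N$ --- is not available, because $\varphi_+$ must be defined from $\eq{f}{\mathrm{Fin}}$ alone, uniformly in $N$: for any fixed choice of the $x_i$ there is a nowhere dense $N$ whose covering antichain has elements properly extending every $x_i$. The device the paper uses, and which is missing here, is a shift in the encoding of $N$: one requires each $t\in A_N$ to have the form $\langle 0^n1\rangle^\frown r$ and sets $g_N(n)=\bigvee\Set{\eq{N_{r^\frown\langle 0\rangle}}{\M}}{\langle 0^n1\rangle^\frown r\in A_N}$, while $\varphi_+$ outputs the sequence $\Seq{\langle 0^{k_i}1\rangle^\frown x_i}{i<\omega}$ where $f(k_i)>\bbot$ and $\eq{N_{x_i}}{\M}\le f(k_i)$. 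The prefix $\langle 0^{k_i}1\rangle$ secures divergence with no constraint on $f$, and if the output point lies in $N$, say $\langle 0^{k_i}1\rangle^\frown x_i\subseteq\langle 0^{k_i}1\rangle^\frown r\in A_N$, then $x_i\subseteq r$, so $f(k_i)$ lies above both $\eq{N_{r^\frown\langle 0\rangle}}{\M}\le g_N(k_i)$ and $\eq{N_{r^\frown\langle 1\rangle}}{\M}\le\neg g_N(k_i)$, refuting \emph{both} disjuncts of the reaping relation at once. Without some such re-encoding, your first reduction cannot be completed.
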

\begin{proof} First, we prove that $\mathbf{D}(\prescript{<\omega}{}{2})\le_\mathrm{T}\mathbf{R}(\prescript{\omega}{}{\C_\omega}/\mathrm{Fin})$. By Lemma~\ref{lemma:seq}, we may equivalently prove that $\langle\nwd(\prescript{<\omega}{}{2}),\perp^*,\suc(\prescript{<\omega}{}{2})\rangle\le_\mathrm{T}\mathbf{R}(\prescript{\omega}{}{\C_\omega}/\mathrm{Fin})$. For each $N\in\nwd(\prescript{<\omega}{}{2})$, choose a maximal antichain $A_N\subseteq\prescript{<\omega}{}{2}$ such that:
\begin{itemize}
\item for all $s\in N$ there exists $t\in A_N$ such that $s\subseteq t$;
\item for all $t\in A_N$ there exists $n<\omega$ such that $\langle 0^n1\rangle\subseteq t$.
\end{itemize}
For every $N\in\nwd(\prescript{<\omega}{}{2})$, we define a function
\[
\begin{split}
g_N\colon\omega&\longrightarrow\C_\omega\\
n&\longmapsto\bigvee\Set*{\eq*{N_{r^\frown\langle 0\rangle}}{\M}}{r\in\prescript{<\omega}{}{2}\text{ and }\langle 0^n1\rangle^\frown r\in A_N}
\end{split}
\]
and finally let
\[
\begin{split}
\varphi_-\colon\nwd(\prescript{<\omega}{}{2})&\longrightarrow\prescript{\omega}{}{\C_\omega}/\mathrm{Fin}\\
N &\longmapsto\eq{g_N}{\mathrm{Fin}}
\end{split}\, .
\]

On the other hand, given $f\in\prescript{\omega}{}{\C_\omega}\setminus\mathrm{Fin}$, pick an increasing sequence $\Seq{k_i}{i<\omega}$ of natural numbers such that $f(k_i)>\bbot$ for all $i<\omega$. By density, for each $i<\omega$ we can choose $x_i\in\prescript{<\omega}{}{2}$ such that $\eq{N_{x_i}}{\M}\le f(k_i)$. Then, we define
\[
\begin{split}
\varphi_+\colon{(\prescript{\omega}{}{\C_\omega}/\mathrm{Fin})}^+&\longrightarrow\suc(\prescript{<\omega}{}{2})\\
\eq{f}{\mathrm{Fin}}&\longmapsto\Seq*{\langle 0^{k_i}1\rangle^\frown x_i}{i<\omega}
\end{split}\, .
\]

The key point is that, for all $N\in\nwd(\prescript{<\omega}{}{2})$, $f\in\prescript{\omega}{}{\C_\omega}\setminus\mathrm{Fin}$, and $i<\omega$
\begin{equation}\label{eq:key}
f(k_i)\le g_N(k_i)\text{ or }f(k_i)\wedge g_N(k_i)=\bbot\implies\langle 0^{k_i}1\rangle^\frown x_i\notin N.
\end{equation}
Indeed, if $\langle 0^{k_i}1\rangle^\frown x_i\in N$, then there exists $t\in A_N$ such that $\langle 0^{k_i}1\rangle^\frown x_i\subseteq t$. Let $r\in\prescript{<\omega}{}{2}$ be such that $t=\langle 0^{k_i}1\rangle^\frown r$; then easily $\bbot<\eq*{N_{r^\frown\langle 0\rangle}}{\M}\le f(k_i)\wedge g_N(k_i)$ and $\bbot<\eq*{N_{r^\frown\langle 1\rangle}}{\M}\le f(k_i)\wedge\neg g_N(k_i)$. Using \eqref{eq:key}, it is immediate to deduce that for all $N\in\nwd(\prescript{<\omega}{}{2})$ and $f\in\prescript{\omega}{}{\C_\omega}\setminus\mathrm{Fin}$
\[
\eq{g_N}{\mathrm{Fin}}\mathrel{R}\eq{f}{\mathrm{Fin}}\implies\Set*{i<\omega}{\langle 0^{k_i}1\rangle^\frown x_i\in N}\text{ is finite},
\]
which means that $\varphi_-$ and $\varphi_+$ give the desired generalized Galois-Tukey connection.

Secondly, to show that $\mathbf{R}(\prescript{\omega}{}{\C_\omega}/\mathrm{Fin})\le_\mathrm{T}\mathbf{D}(\prescript{<\omega}{}{2})\mathbin{;}\mathbf{R}(\mathcal{P}(\omega)/\mathrm{fin})$, let $\Set{a_n}{n<\omega}$ be a fixed maximal antichain in $\C_\omega$. Moreover, for every $D\in\Dense(\prescript{<\omega}{}{2})$ and $n<\omega$, choose $D(n)\in D$ such that $\eq*{N_{D(n)}}{\M}\le a_n$.

Given $g\colon\omega\to\C_\omega$, we define a nowhere dense set
\[
N_g=\Set*{s\in\prescript{<\omega}{}{2}}{\lnot(\exists n<\omega)\bigl(\eq{N_s}{\M}\le a_n\wedge g(n)\text{ or }\eq{N_s}{\M}\le a_n\wedge \neg g(n)\bigr)}
\]
and a function
\[
\begin{split}
\gamma_g\colon\Dense(\prescript{<\omega}{}{2})&\longrightarrow\mathcal{P}(\omega)/\mathrm{fin}\\
D &\longmapsto\eq{\Set{n<\omega}{D(n)\le g(n)}}{\mathrm{fin}}
\end{split}\, ;
\]
then, we define
\[
\begin{split}
\psi_-\colon\prescript{\omega}{}{\C_\omega}/\mathrm{Fin}&\longrightarrow\nwd(\prescript{<\omega}{}{2})\times\prescript{\Dense(\prescript{<\omega}{}{2})}{}{\mathcal{P}(\omega)/\mathrm{fin}}\\
\eq{g}{\mathrm{Fin}} &\longmapsto\langle N_g,\gamma_g\bigr\rangle
\end{split}\, .
\]

On the other hand, given $D\in\Dense(\prescript{<\omega}{}{2})$ and an infinite $X\subseteq\omega$, we define a function $f_{D,X}\colon\omega\to\C_\omega$ as follows: for all $n<\omega$
\[
f_{D,X}(n)=
\begin{cases}
\eq*{N_{D(n)}}{\M} & \text{if }n\in X \\
\bbot & \text{if }n\notin X
\end{cases}\, ;
\]
then, we let
\[
\begin{split}
\psi_+\colon\Dense(\prescript{<\omega}{}{2})\times{(\mathcal{P}(\omega)/\mathrm{fin})}^+&\longrightarrow{(\prescript{\omega}{}{\C_\omega}/\mathrm{Fin})}^+\\
\langle D,\eq{X}{\mathrm{fin}}\rangle &\longmapsto\eq{f_{D,X}}{\mathrm{Fin}}
\end{split}\, .
\]

To show that $\psi_-$ and $\psi_+$ form indeed a generalized Galois-Tukey connection, we have to show that for all $g\colon\omega\to\C_\omega$, $D\in\Dense(\prescript{<\omega}{}{2})$, and infinite $X\subseteq\omega$
\begin{equation}\label{eq:gtbc}
N_g\cap D\text{ is finite and }\gamma_g(D)\mathrel{R}\eq{X}{\mathrm{fin}}\implies\eq{g}{\mathrm{Fin}}\mathrel{R}\eq{f_{D,X}}{\mathrm{Fin}}.
\end{equation}
However, if $N_g\cap D$ is finite, then there exists $k<\omega$ such that
\[
(\forall n\ge k)\bigl(\eq*{N_{D(n)}}{\M}\le g(n)\text{ or }\eq*{N_{D(n)}}{\M}\wedge g(n)=\bbot\bigr),
\]
from which \eqref{eq:gtbc} follows as in the conclusion of Theorem~\ref{theorem:bound}.
\end{proof}

\begin{remark} The structure of the second part of the proof of Theorem~\ref{theorem:reap} is similar to that of Theorem~\ref{theorem:bound}, however using the density gives a sharper bound. In fact, Theorem~\ref{theorem:bound} is enough for the inequality $\rr(\prescript{\omega}{}{\C_\omega}/\mathrm{Fin})\le\rr+\cof(\M)$, but not enough for $\sss(\prescript{\omega}{}{\C_\omega}/\mathrm{Fin})\ge\min\{\sss,\add(\M)\}$.
\end{remark}

\begin{corollary}\label{corollary:reap} $\rr(\prescript{\omega}{}{\C_\omega}/\mathrm{Fin})=\rr+\cof(\M)$ and $\sss(\prescript{\omega}{}{\C_\omega}/\mathrm{Fin})=\min\{\sss,\add(\M)\}$.
\end{corollary}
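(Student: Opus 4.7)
The plan is to combine Theorem~\ref{theorem:reap}, Proposition~\ref{proposition:reap}, Theorem~\ref{theorem:ker}, and Blass's cardinal arithmetic for sequential composition, invoking Remark~\ref{remark:vojtas} to convert the Galois--Tukey arrows into inequalities between $\rr$ and $\sss$ numbers. No new construction is needed; it is bookkeeping.

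First, I would derive the upper bound for $\rr(\prescript{\omega}{}{\C_\omega}/\mathrm{Fin})$: by the right-hand inequality in Theorem~\ref{theorem:reap} and Remark~\ref{remark:vojtas},
\[
\rr(\prescript{\omega}{}{\C_\omega}/\mathrm{Fin}) \le \dd\bigl(\mathbf{D}(\prescript{<\omega}{}{2})\mathbin{;}\mathbf{R}(\mathcal{P}(\omega)/\mathrm{fin})\bigr) = \dd(\mathbf{D}(\prescript{<\omega}{}{2}))\cdot\rr = \cof(\M)\cdot\rr = \cof(\M)+\rr,
\]
where the first equality uses Blass's proposition, the second uses Theorem~\ref{theorem:ker}, and the last uses that both factors are uncountable. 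For the matching lower bound, the left-hand inequality in Theorem~\ref{theorem:reap} with Remark~\ref{remark:vojtas} and Theorem~\ref{theorem:ker} gives $\cof(\M) \le \rr(\prescript{\omega}{}{\C_\omega}/\mathrm{Fin})$, while Proposition~\ref{proposition:reap} with Remark~\ref{remark:vojtas} gives $\rr \le \rr(\prescript{\omega}{}{\C_\omega}/\mathrm{Fin})$.

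The splitting number computation is symmetric but uses the bounding part of Remark~\ref{remark:vojtas}. From the right-hand inequality of Theorem~\ref{theorem:reap}, Blass's proposition, and Theorem~\ref{theorem:ker},
\[
\min\{\sss,\add(\M)\} = \bb\bigl(\mathbf{D}(\prescript{<\omega}{}{2})\mathbin{;}\mathbf{R}(\mathcal{P}(\omega)/\mathrm{fin})\bigr) \le \sss(\prescript{\omega}{}{\C_\omega}/\mathrm{Fin}).
\]
Conversely, Proposition~\ref{proposition:reap} yields $\sss(\prescript{\omega}{}{\C_\omega}/\mathrm{Fin}) \le \sss$, and the left-hand inequality of Theorem~\ref{theorem:reap} together with Theorem~\ref{theorem:ker} yields $\sss(\prescript{\omega}{}{\C_\omega}/\mathrm{Fin}) \le \add(\M)$; taking the minimum gives the desired bound.

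There is no real obstacle here; the only point to verify is that one may replace $\cof(\M)\cdot\rr$ by $\cof(\M)+\rr$, which is justified because both cardinals are uncountable (indeed, each is at least $\aleph_1$, since $\cof(\M)\ge\add(\M)\ge\aleph_1$ and $\rr\ge\aleph_1$ in ZFC). All the real content lies in Theorem~\ref{theorem:reap} and Proposition~\ref{proposition:reap}, which have already been established.
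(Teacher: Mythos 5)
Your proposal is correct and is exactly the paper's argument: the paper proves the corollary by citing Proposition~\ref{proposition:reap}, Theorem~\ref{theorem:ker}, and Theorem~\ref{theorem:reap}, and your write-up simply makes explicit the bookkeeping via Remark~\ref{remark:vojtas} and Blass's formulas for $\dd$ and $\bb$ of a sequential composition. The only detail worth noting is the (correct) observation that $\cof(\M)\cdot\rr=\cof(\M)+\rr$ since both cardinals are uncountable.
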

\begin{proof} The two equalities follow from Proposition~\ref{proposition:reap}, Theorem~\ref{theorem:ker}, and Theorem~\ref{theorem:reap}.
\end{proof}

\section{Ultrafilter numbers}\label{section:cinque}

This section contains some applications to the ultrafilter number of Boolean algebras, which follow from the results of the previous section, in particular Corollary~\ref{corollary:reap}, as well as from the parametrized diamond principle of Definition~\ref{def:diam}.

The following definition is standard: see, for instance, Monk~\cite[Definition (H)]{MR1877033}.

\begin{definition} If $\B$ is an infinite Boolean algebra, let
\[
\uu(\B)=\min\Set{\cof(\langle U,\ge\rangle)}{U\text{ is a non-principal ultrafilter on }\B}
\]
be the \emph{ultrafilter number} of $\B$. Following the usual notation, let $\uu=\uu(\mathcal{P}(\omega)/\mathrm{fin})$.
\end{definition}

\begin{remark}\label{remark:rruu} It is immediate from the definitions that $\rr(\B)\le\uu(\B)$ whenever $\B$ is infinite.
\end{remark}

The next proposition implies that, in many cases, the ultrafilter number is preserved by reduced powers.

\begin{proposition}\label{proposition:ult} If $\B$ is a Boolean algebra, then $\uu\le\uu(\prescript{\omega}{}{\B}/\mathrm{Fin})$. If, in addition, $\B$ is complete, atomless, and c.c.c., then $\uu(\prescript{\omega}{}{\B}/\mathrm{Fin})=\uu(\B)$.
\end{proposition}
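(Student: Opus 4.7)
The plan is to handle the two assertions of the proposition separately. For $\uu\le\uu(\prescript{\omega}{}{\B}/\mathrm{Fin})$ I observe that the map $\varphi_-$ constructed in Proposition~\ref{proposition:reap} is in fact an injective Boolean algebra homomorphism: it sends $\eq{X}{\mathrm{fin}}$ to $\eq{\chi_X}{\mathrm{Fin}}$ and preserves $\wedge$ and $\neg$ by pointwise inspection. Given a non-principal ultrafilter $V$ on $\prescript{\omega}{}{\B}/\mathrm{Fin}$ with base $\mathcal{V}$ of minimum cardinality, the preimage $U=\varphi_-^{-1}(V)$ is therefore a non-principal ultrafilter on $\mathcal{P}(\omega)/\mathrm{fin}$ (non-principality is automatic since the codomain is atomless). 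The family $\Set{\varphi_+(\eq{f}{\mathrm{Fin}})}{\eq{f}{\mathrm{Fin}}\in\mathcal{V}}$ is a base for $U$: each element lies in $U$ because $\eq{f}{\mathrm{Fin}}\le\varphi_-(\varphi_+(\eq{f}{\mathrm{Fin}}))$, and for any $\eq{X}{\mathrm{fin}}\in U$ some $\eq{f}{\mathrm{Fin}}\in\mathcal{V}$ satisfies $\eq{f}{\mathrm{Fin}}\le\varphi_-(\eq{X}{\mathrm{fin}})$, which translates directly to $\varphi_+(\eq{f}{\mathrm{Fin}})\le\eq{X}{\mathrm{fin}}$.

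Turning to the equality $\uu(\prescript{\omega}{}{\B}/\mathrm{Fin})=\uu(\B)$, I would first establish $\uu(\B)\le\uu(\prescript{\omega}{}{\B}/\mathrm{Fin})$ using the constant embedding $\iota\colon\B\to\prescript{\omega}{}{\B}/\mathrm{Fin}$ sending $b$ to $\eq{\langle b,b,\dots\rangle}{\mathrm{Fin}}$, which is an injective Boolean algebra homomorphism. Given a base $\mathcal{V}$ of $V$ on the reduced power, the set $U_\B=\iota^{-1}(V)$ is a non-principal ultrafilter on $\B$ by atomlessness. Completeness of $\B$ lets me set $b_{f,N}=\bigvee_{n\ge N}f(n)\in\B$ for each $\eq{f}{\mathrm{Fin}}\in\mathcal{V}$ and $N<\omega$; then $b_{f,N}\in U_\B$ because $\iota(b_{f,N})$ dominates $\eq{f}{\mathrm{Fin}}\in V$, and whenever $a\in U_\B$ the relation $\eq{f}{\mathrm{Fin}}\le\iota(a)$ for some $\eq{f}{\mathrm{Fin}}\in\mathcal{V}$ forces $f(n)\le a$ cofinitely, hence $b_{f,N}\le a$ once $N$ is large enough. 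This gives a base for $U_\B$ of cardinality at most $|\mathcal{V}|\cdot\aleph_0=|\mathcal{V}|$.

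The reverse direction $\uu(\prescript{\omega}{}{\B}/\mathrm{Fin})\le\uu(\B)$ is the main obstacle. Starting from a non-principal ultrafilter $U$ on $\B$ with base $U_0$ of size $\uu(\B)$, atomlessness combined with Zorn's lemma yields a countable maximal antichain $\{a_n:n<\omega\}$ in $\B$ with $a_n\notin U$ for all $n$; the family $\mathcal{W}=\Set{X\subseteq\omega}{\bigvee_{n\in X}a_n\in U}$ is then a non-principal ultrafilter on $\omega$, with base $\{X_u:u\in U_0\}$ where $X_u=\{n<\omega:u\wedge a_n>\bbot\}$, and this already delivers the side fact $\uu\le\uu(\B)$. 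I would then set $V=\tau^{-1}(\mathcal{W})$, where $\tau(\eq{f}{\mathrm{Fin}})=\eq{\{n<\omega:f(n)\in U\}}{\mathrm{fin}}$ is a surjective Boolean homomorphism, and propose as a base for $V$ the equivalence classes of the piecewise-constant functions $g_{u,v}$ defined by $g_{u,v}(n)=u$ for $n\in X_v$ and $g_{u,v}(n)=\bbot$ otherwise, with $u,v\in U_0$. The main technical step---and the point where I expect the real work to lie---is to verify that every $\eq{f}{\mathrm{Fin}}\in V$ actually dominates some such $\eq{g_{u,v}}{\mathrm{Fin}}$: because $f$ may take infinitely many distinct values across coordinates, this calls for combining completeness of $\B$ with the coherence between $U_0$ and the antichain $\{a_n\}$ to extract a single $u\in U_0$ that works uniformly on a set belonging to $\mathcal{W}$.
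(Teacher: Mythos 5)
Your first two inequalities are fine and follow the paper's route exactly: $\uu\le\uu(\prescript{\omega}{}{\B}/\mathrm{Fin})$ by pulling an ultrafilter back along the homomorphism $\varphi_-$ and pushing a base forward with $\varphi_+$, and $\uu(\B)\le\uu(\prescript{\omega}{}{\B}/\mathrm{Fin})$ via the constant embedding together with the suprema $\bigvee_{n\ge N}f(n)$. The gap is in the third step, $\uu(\prescript{\omega}{}{\B}/\mathrm{Fin})\le\uu(\B)$, and it sits precisely at the point you flag as ``the main technical step'': for the ultrafilter $V=\tau^{-1}(\mathcal{W})$ you chose, the classes $\eq{g_{u,v}}{\mathrm{Fin}}$ do \emph{not} form a base, and no further work will make them one. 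Concretely, let $u_n=\neg\bigvee_{m\le n}a_m$, using your own antichain $\Set{a_n}{n<\omega}$ disjoint from $U$. Each $u_n\in U$, the sequence is decreasing, and $\bigwedge_{n<\omega}u_n=\bbot$. The function $f(n)=u_n$ satisfies $\Set{n<\omega}{f(n)\in U}=\omega$, so $\eq{f}{\mathrm{Fin}}\in V$. But each $X_v$ is infinite (otherwise $v\le\bigvee_{n\in X_v}a_n$ would put a finite join of non-members of $U$ into $U$), so $\eq{g_{u,v}}{\mathrm{Fin}}\le\eq{f}{\mathrm{Fin}}$ would give $u\le u_n$ for arbitrarily large $n$, hence $u\le\bigwedge_n u_n=\bbot$, contradicting $u\in U_0$. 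The structural problem is that $\tau^{-1}(\mathcal{W})$ is a Fubini-type limit of copies of $U$ along $\mathcal{W}$: controlling $f$ coordinatewise by the condition $f(n)\in U$ gives no uniform lower bound across coordinates, which is exactly what a small base must supply.

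The repair is to pick a \emph{different} ultrafilter on the reduced power, one concentrated on the antichain rather than on membership of the coordinates in $U$. With $V$ a non-principal ultrafilter on $\B$ of minimal character and $\Set{a_n}{n<\omega}$ a maximal antichain disjoint from $V$, the paper takes
\[
U'=\Set*{\eq{f}{\mathrm{Fin}}}{\bigvee\nolimits_{n<\omega}\bigl(f(n)\wedge a_n\bigr)\in V},
\]
which is a well-defined non-principal ultrafilter (the map $f\mapsto\bigvee_n(f(n)\wedge a_n)$ is a homomorphism by completeness and disjointness of the $a_n$, and changing $f$ on finitely many coordinates perturbs the value only below a finite join of the $a_n$, which is never in $V$). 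If $D$ is a base of $V$, then $\Set{\eq{\Seq{d\wedge a_n}{n<\omega}}{\mathrm{Fin}}}{d\in D}$ is a base of $U'$: given $\eq{f}{\mathrm{Fin}}\in U'$, choose $d\in D$ with $d\le\bigvee_n(f(n)\wedge a_n)$; meeting with $a_m$ gives $d\wedge a_m\le f(m)\wedge a_m\le f(m)$ for every $m$. A single element $d$ is thus spread over all coordinates by the antichain---the uniformization your piecewise-constant functions cannot achieve.
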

\begin{proof} First, to show that $\uu\le\uu(\prescript{\omega}{}{\B}/\mathrm{Fin})$, consider the functions $\varphi_-\colon\mathcal{P}(\omega)/\mathrm{fin}\to\prescript{\omega}{}{\B}/\mathrm{Fin}$ and $\varphi_+\colon{(\prescript{\omega}{}{\B}/\mathrm{Fin})}^+\to{(\mathcal{P}(\omega)/\mathrm{fin})}^+$ defined in the proof of Proposition~\ref{proposition:reap}. Given an ultrafilter $U$ on $\prescript{\omega}{}{\B}/\mathrm{Fin}$, it is easy to check that $\varphi_-^{-1}[U]$ is a non-principal ultrafilter on $\mathcal{P}(\omega)/\mathrm{fin}$ and that, if $C$ is cofinal in $\langle U,\ge\rangle$, then $\varphi_+[C]$ is cofinal in $\bigl\langle\varphi_-^{-1}[U],\ge\bigr\rangle$.

We assume henceforth that $\B$ is a complete atomless c.c.c.\ Boolean algebra and deduce that $\uu(\prescript{\omega}{}{\B}/\mathrm{Fin})=\uu(\B)$. For the inequality $\uu(\B)\le\uu(\prescript{\omega}{}{\B}/\mathrm{Fin})$, consider the homomorphism of Boolean algebras
\[
\begin{split}
\hat{e}\colon\B&\longrightarrow\prescript{\omega}{}{\B}/\mathrm{Fin}\\
b&\longmapsto\eq{\Seq{b}{n<\omega}}{\mathrm{Fin}}
\end{split}\, .
\]
If $U$ is any ultrafilter on $\prescript{\omega}{}{\B}/\mathrm{Fin}$, then $\hat{e}^{-1}[U]$ is an ultrafilter on $\B$, since $\hat{e}$ is a homomorphism. Moreover, $\hat{e}^{-1}[U]$ is not principal, since $\B$ is atomless. Finally, if $C$ is cofinal in $\langle U,\ge\rangle$, then the set
\[
\Set*{\bigvee\Set{f(n)}{n\ge k}}{\eq{f}{\mathrm{Fin}}\in C\text{ and }k<\omega}
\]
is cofinal in $\bigl\langle\hat{e}^{-1}[U],\ge\bigr\rangle$. This argument establishes that $\uu(\B)\le\uu(\prescript{\omega}{}{\B}/\mathrm{Fin})+\aleph_0$ but, since the latter is clearly infinite, we can conclude that $\uu(\B)\le\uu(\prescript{\omega}{}{\B}/\mathrm{Fin})$.

Lastly, we prove that $\uu(\prescript{\omega}{}{\B}/\mathrm{Fin})\le\uu(\B)$. If $V$ is a non-principal ultrafilter on $\B$, then there exists a maximal antichain in $\B$ disjoint from $V$, which we can enumerate as $\Set{a_n}{n<\omega}$. It follows that
\[
U=\Set*{\eq{f}{\mathrm{Fin}}\in\prescript{\omega}{}{\B}/\mathrm{Fin}}{\bigvee\Set{f(n)\wedge a_n}{n<\omega}\in V}
\]
is a non-principal ultrafilter on $\prescript{\omega}{}{\B}/\mathrm{Fin}$. Furthermore, if $D$ is cofinal in $\langle V,\ge\rangle$, then clearly
\[
\Set{\eq{\Seq{d\wedge a_n}{n<\omega}}{\mathrm{Fin}}}{d\in D}
\]
is cofinal in $\langle U,\ge\rangle$.
\end{proof}

Recall that a Boolean algebra $\B$ is \emph{Borel} if the domain of $\B$ is a Borel subset of $\prescript{\omega}{}{2}$ and, moreover, the order relation $\le$ and the incompatibility relation $\perp$ are Borel subsets of $\prescript{\omega}{}{2}\times\prescript{\omega}{}{2}$. For the purpose of the next theorem, however, we shall need a stronger notion which we call ``Borel-homogeneity''.

\begin{definition} A Boolean algebra $\B$ is \emph{Borel-homogeneous} if:
\begin{itemize}
\item $\B$ is a Borel Boolean algebra,
\item the meet operation $\wedge\colon\B\times\B\to\B$ is a Borel function,
\item for each $b\in\B^+$ the relative algebra $\B\rest b$ is isomorphic to $\B$ via a Borel function.
\end{itemize}
\end{definition}

Moore, Hru{\v s}{\'a}k, and D{\v z}amonja~\cite[Theorem 7.8]{MR2048518} showed, in particular, that $\diamondsuit(\mathbf{R}(\mathcal{P}(\omega)/\mathrm{fin}))$ implies $\uu=\aleph_1$. We generalize their argument from $\mathcal{P}(\omega)/\mathrm{fin}$ to reduced powers of Borel-homogeneous Boolean algebras.

\begin{theorem}\label{theorem:diamond} If $\B$ is a Borel Boolean algebra, then $\mathbf{R}(\prescript{\omega}{}{\B}/\mathrm{Fin})$ is a Borel relational system. If, in addition, $\B$ is Borel-homogeneous, then $\diamondsuit(\mathbf{R}(\prescript{\omega}{}{\B}/\mathrm{Fin}))$ implies $\uu(\prescript{\omega}{}{\B}/\mathrm{Fin})=\aleph_1$.
\end{theorem}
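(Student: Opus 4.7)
We split the proof into two parts.

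For the first assertion, we verify the three requirements of a Borel relational system. Working with representatives, $\prescript{\omega}{}{\B}$ is Borel as a countable product of copies of the Borel set $\B\subseteq\prescript{\omega}{}{2}$, and the ideal $\mathrm{Fin}$ is $\Sigma^0_2$; the reaping relation unfolds to ``$\Set{n<\omega}{r(n)\not\le b(n)}$ is finite'' or ``$\Set{n<\omega}{r(n)\wedge b(n)>\bbot}$ is finite'', both of which are Borel since the Boolean operations, $\le$, and $\perp$ on $\B$ are Borel and finiteness is Borel. For the non-triviality: every $\eq{b}{\mathrm{Fin}}$ is reaped by itself if non-zero and by any non-zero element otherwise; conversely, $\prescript{\omega}{}{\B}/\mathrm{Fin}$ is atomless whenever $\abs{\B}\ge 2$ (split the support of any representative into two infinite pieces), so every non-zero $\eq{r}{\mathrm{Fin}}$ is properly split by some $\eq{b}{\mathrm{Fin}}$.

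For the second assertion, we follow the template of Moore, Hru{\v s}{\'a}k, and D{\v z}amonja~\cite[Theorem~7.8]{MR2048518}, who established the analogous implication $\diamondsuit(\mathbf{R}(\mathcal{P}(\omega)/\mathrm{fin}))\Rightarrow\uu=\aleph_1$. We construct by recursion of length $\omega_1$ a descending sequence $\Seq{\eq{u_\alpha}{\mathrm{Fin}}}{\alpha<\omega_1}$ in $\prescript{\omega}{}{\B}/\mathrm{Fin}$ generating a non-principal ultrafilter $U$ with $\Set{\eq{u_\alpha}{\mathrm{Fin}}}{\alpha<\omega_1}$ cofinal in $\langle U,\ge\rangle$. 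Recall that a filter is an ultrafilter precisely when every element of the algebra is reaped by some element of the filter, so the goal is to ensure that every potential $\eq{b}{\mathrm{Fin}}$ is reaped by some $\eq{u_\alpha}{\mathrm{Fin}}$. For a Borel function $F\colon\prescript{<\omega_1}{}{2}\to\prescript{\omega}{}{\B}$ to be engineered below, the diamond delivers $g\colon\omega_1\to(\prescript{\omega}{}{\B}/\mathrm{Fin})^+$ such that, for every $f\colon\omega_1\to 2$, the set $\Set{\alpha<\omega_1}{\eq{F(f\rest\alpha)}{\mathrm{Fin}}\mathrel{R}g(\alpha)}$ is stationary; this stationary-hitting property is what guarantees that every challenge $\eq{b}{\mathrm{Fin}}$ eventually gets decided.

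The recursion uses the following book-keeping. At each countable stage $\alpha$, we record the construction so far (the chosen $\eq{u_\beta}{\mathrm{Fin}}$ for $\beta<\alpha$, together with the challenges addressed) as an element of $\prescript{\alpha}{}{2}$, and let $F(f\rest\alpha)$ extract the next challenge from this code. We then define $\eq{u_{\alpha+1}}{\mathrm{Fin}}$ to be either $\eq{u_\alpha}{\mathrm{Fin}}\wedge g(\alpha)$ or $\eq{u_\alpha}{\mathrm{Fin}}\wedge\neg g(\alpha)$, according to which clause of the reaping $\eq{F(f\rest\alpha)}{\mathrm{Fin}}\mathrel{R}g(\alpha)$ is witnessed (and simply $\eq{u_\alpha}{\mathrm{Fin}}$ when $g(\alpha)$ is incompatible with the current filter); at limit stages we take a diagonal pseudo-intersection of the countable base built so far, which exists in $\prescript{\omega}{}{\B}/\mathrm{Fin}$. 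The main technical obstacle is to arrange $F$ to be genuinely Borel on every $\prescript{\delta}{}{2}$: this requires decoding an arbitrary countable filter base into a representative of its pseudo-intersection in a Borel fashion, and it is exactly here that the Borel-homogeneity hypothesis is essential, since it supplies uniform Borel isomorphisms $(\prescript{\omega}{}{\B}/\mathrm{Fin})\rest\eq{u_\alpha}{\mathrm{Fin}}\cong\prescript{\omega}{}{\B}/\mathrm{Fin}$ (built from the Borel isomorphisms $\B\rest b\cong\B$) that permit the entire recursion to be implemented by Borel operations on finite binary sequences.
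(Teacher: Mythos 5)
Your verification of the first assertion is fine, and you have correctly identified the template (Moore--Hru\v{s}\'ak--D\v{z}amonja, Theorem 7.8) and the fact that Borel-homogeneity must enter somewhere. The gap is in the mechanics of the successor step, and it is fatal as stated. You define $\eq{u_{\alpha+1}}{\mathrm{Fin}}$ to be $\eq{u_\alpha}{\mathrm{Fin}}\wedge g(\alpha)$ or $\eq{u_\alpha}{\mathrm{Fin}}\wedge\neg g(\alpha)$ ``according to which clause of the reaping $\eq{F(f\rest\alpha)}{\mathrm{Fin}}\mathrel{R}g(\alpha)$ is witnessed''. But the sequence $\Seq{u_\alpha}{\alpha<\omega_1}$ must be built once, before any branch $f$ is chosen: there are continuum many challenges $C$, each giving rise to its own branch $f_C$ coding $\langle\Seq{u_\xi}{\xi<\alpha},C\rangle$, and the diamond is only invoked afterwards, separately for each $f_C$. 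Your recursion, which consults $F(f\rest\alpha)$ to decide how to shrink, is not well defined (decide for which $f$?), and your book-keeping phrase ``the challenges addressed'' suggests an enumeration of all challenges in order type $\omega_1$, which is a CH-style argument rather than a $\diamondsuit$-style one. Moreover, your fallback ``simply $\eq{u_\alpha}{\mathrm{Fin}}$ when $g(\alpha)$ is incompatible with the current filter'' destroys the argument: the guess $g(\alpha)$ has no reason to be compatible with $\eq{u_\alpha}{\mathrm{Fin}}$, and if on the whole stationary set of successful guesses for the branch $f_C$ you invoke the fallback, the challenge $C$ is never decided and the filter is not ultra.

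The correct mechanism, which is where Borel-homogeneity is really used, is to make the challenge and the guess communicate through the relative algebra below the diagonal pseudo-intersection. Given $\vec{U}=\Seq{U_\xi}{\xi<\delta}$, one forms $B(\vec{U})(k_i)=\bigwedge_{j\le i}U_{e_\delta^{-1}(j)}(k_i)$ on a suitable subsequence, and sets $F(\vec{U},C)(i)=\varphi^{-1}_{B(\vec{U})(k_i)}\bigl(B(\vec{U})(k_i)\wedge C(k_i)\bigr)$, i.e.\ $F$ outputs the challenge \emph{transported back to the full algebra} from below the pseudo-intersection; dually, $U_\delta(k_i)=\varphi_{B(\vec{U})(k_i)}(g(\delta)(i))$ is defined directly as the image of the guess, so it is automatically non-zero, automatically below every $U_\xi$ modulo $\mathrm{Fin}$, and independent of any challenge. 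Then for any $C$, if $g(\delta)$ reaps $F(\vec{U},C)$ at some $\delta$ in the stationary set for $f_C$, applying $\varphi_{B(\vec{U})(k_i)}$ shows that $U_\delta$ itself decides $C$ --- no intersection with $g(\delta)$, and no fallback, is needed. Your proposal attributes the role of the Borel isomorphisms only to making the pseudo-intersection extraction Borel; in fact the pseudo-intersection needs only Borel meets, and homogeneity is needed precisely for this two-way transport between the full algebra (where $g$ lives) and the algebra below $B(\vec{U})$ (where the sequence must continue).
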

\begin{proof} The first assertion is straightforward from the definitions. Suppose in addition that $\B$ is Borel-homogeneous and fix, for each $b\in\B^+$, a Borel isomorphism $\varphi_b\colon\B\to\B\rest b$. For convenience, for each $\omega\le\delta<\omega_1$ fix also a bijection $e_\delta\colon\delta\to\omega$. Let us assume $\diamondsuit(\mathbf{R}(\prescript{\omega}{}{\B}/\mathrm{Fin}))$ holds: in order to define a Borel function $F\colon\prescript{<\omega_1}{}{2}\to\prescript{\omega}{}{\B}$, it will be sufficient to define $F$ on a Borel subset of $\prescript{\delta}{}{2}$ for every $\omega\le\delta<\omega_1$, then extend $F$ to $\prescript{<\omega_1}{}{2}$ by assigning a constant value elsewhere.

Following the notation of the proof of \cite[Theorem 7.8]{MR2048518}, the domain of $F$ consists of pairs $\langle\vec{U},C\rangle$, where:
\begin{itemize}
\item $\vec{U}=\Seq{U_\xi}{\xi<\delta}$ for some $\omega\le\delta<\omega_1$;
\item for all $\xi<\delta$, $U_\xi\in\prescript{\omega}{}{\B}\setminus\mathrm{Fin}$;
\item for all $\xi<\eta<\delta$, $\eq{U_\eta}{\mathrm{Fin}}\le\eq{U_\xi}{\mathrm{Fin}}$;
\item $C\in\B$.
\end{itemize}
For every such $\vec{U}$, construct recursively an increasing sequence $\Seq{k_i}{i<\omega}$ of natural numbers such that $\bigwedge_{j\le i}U_{e^{-1}_\delta(j)}(k_i)>\bbot$ for all $i<\omega$, and define
\begin{equation}\label{eq:bu}
B(\vec{U})(k_i)=\bigwedge_{j\le i}U_{e^{-1}_\delta(j)}(k_i).
\end{equation}
Next, for every pair $\langle\vec{U},C\rangle$ in the domain of $F$, we let
\[
\begin{split}
F(\vec{U},C)\colon\omega&\longrightarrow\B\\
i&\longmapsto\varphi^{-1}_{B(\vec{U})(k_i)}\bigl(B(\vec{U})(k_i)\wedge C(k_i)\bigr)
\end{split}\, .
\]

By $\diamondsuit(\mathbf{R}(\prescript{\omega}{}{\B}/\mathrm{Fin}))$, there exists $g\colon\omega_1\to\prescript{\omega}{}{\B}\setminus\mathrm{Fin}$ such that for every $f\colon\omega_1\to 2$ the set $\Set{\alpha<\omega_1}{\eq{F(f\rest\alpha)}{\mathrm{Fin}}\mathrel{R}\eq{g(\alpha)}{\mathrm{Fin}}}$ is stationary. Now, using the function $g$, we construct recursively a sequence $\Seq{U_\xi}{\xi<\omega_1}$ such that:
\begin{enumerate}
\item\label{u1} for all $\xi<\omega_1$, $U_\xi\in\prescript{\omega}{}{\B}\setminus\mathrm{Fin}$;
\item\label{u2} for all $\xi<\delta<\omega_1$, $\eq{U_\delta}{\mathrm{Fin}}\le\eq{U_\xi}{\mathrm{Fin}}$.
\end{enumerate}
First of all, let $U_0\colon\omega\to\B$ be the function with constant value $\btop$ and, for each $n<\omega$, let $U_{n+1}=\langle\bbot\rangle^\frown U_n$. For the general case, suppose $\omega\le\delta<\omega$ and let $\vec{U}=\Seq{U_\xi}{\xi<\delta}$ denote the sequence constructed so far. We define $U_\delta\colon\omega\to\B$ as follows: for every $i<\omega$,
\[
U_\delta(k_i)=\varphi_{B(\vec{U})(k_i)}(g(\delta)(i)),
\]
where $B(\vec{U})(k_i)$ is defined as in \eqref{eq:bu} for the sequence $\vec{U}$. If $n\in\omega\setminus\Set{k_i}{i<\omega}$, then we let $U_\delta(n)=\bbot$. Since $g(\delta)\in\prescript{\omega}{}{\B}\setminus\mathrm{Fin}$, there exist infinitely many $i<\omega$ such that $g(\delta)(i)>\bbot$; for every such $i$, we also have $U_\delta(k_i)>\bbot$, as $\varphi_{B(\vec{U})(k_i)}$ is injective. Therefore $U_\delta\in\prescript{\omega}{}{\B}\setminus\mathrm{Fin}$ and condition~\eqref{u1} is preserved. To check condition~\eqref{u2}, take $\xi<\delta$ and observe that for all $i\ge e_\delta(\xi)$
\[
U_\delta(k_i)\le B(\vec{U})(k_i)\le U_\xi(k_i),
\]
hence $\eq{U_\delta}{\mathrm{Fin}}\le\eq{U_\xi}{\mathrm{Fin}}$. This completes the recursive construction of the sequence $\Seq{U_\xi}{\xi<\omega_1}$.

Now, we define
\[
U=\Set*{\eq{f}{\mathrm{Fin}}\in\prescript{\omega}{}{\B}/\mathrm{Fin}}{\text{there exists }\xi<\omega_1\text{ such that }\eq{U_\xi}{\mathrm{Fin}}\le\eq{f}{\mathrm{Fin}}}.
\]
It is easy to see that $U$ is a filter on $\prescript{\omega}{}{\B}/\mathrm{Fin}$; to verify that $U$ is in fact an ultrafilter, let $C\colon\omega\to\B$. Unravelling the coding, choose a function $f\colon\omega_1\to 2$ such that $f\rest\alpha=\bigl\langle\Seq{U_\xi}{\xi<\alpha},C\bigr\rangle$ for each $\alpha<\omega_1$. By stationarity, there exists some $\delta<\omega_1$ such that, denoting for simplicity $\vec{U}=\Seq{U_\xi}{\xi<\delta}$, we have
\[
\eq{g(\delta)}{\mathrm{Fin}}\le\eq[\big]{F(\vec{U},C)}{\mathrm{Fin}}\quad\text{or}\quad\eq{g(\delta)}{\mathrm{Fin}}\wedge\eq[\big]{F(\vec{U},C)}{\mathrm{Fin}}=\bbot.
\]
In case $\eq{g(\delta)}{\mathrm{Fin}}\le\eq[\big]{F(\vec{U},C)}{\mathrm{Fin}}$, then for all but finitely many $i<\omega$
\[
g(\delta)(i)\le\varphi^{-1}_{B(\vec{U})(k_i)}\bigl(B(\vec{U})(k_i)\wedge C(k_i)\bigr).
\]
Applying the isomorphism $\varphi_{B(\vec{U})(k_i)}$ to both sides of the above inequality, we get
\[
U_\delta(k_i)\le B(\vec{U})(k_i)\wedge C(k_i)
\]
and, consequently, $\eq{U_\delta}{\mathrm{Fin}}\le\eq{C}{\mathrm{Fin}}$. A completely analogous argument shows that, if $\eq{g(\delta)}{\mathrm{Fin}}\wedge\eq[\big]{F(\vec{U},C)}{\mathrm{Fin}}=\bbot$, then $\eq{U_\delta}{\mathrm{Fin}}\le\neg\eq{C}{\mathrm{Fin}}$. In conclusion, $U$ is an ultrafilter on $\prescript{\omega}{}{\B}/\mathrm{Fin}$ which, by construction, contains a cofinal subset of cardinality $\aleph_1$. It follows that $\uu(\prescript{\omega}{}{\B}/\mathrm{Fin})\le\aleph_1$.

For the reverse inequality, it is enough to use Proposition~\ref{proposition:ult} and conclude that $\aleph_1\le\uu\le\uu(\prescript{\omega}{}{\B}/\mathrm{Fin})$.
\end{proof}

We would like to show that $\C_\omega$ satisfies the hypothesis of Theorem~\ref{theorem:diamond}.

\begin{proposition}\label{proposition:hom} The Cohen algebra is Borel-homogeneous.
\end{proposition}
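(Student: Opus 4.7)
My plan is to represent the Cohen algebra concretely as the algebra of regular open subsets of $\prescript{\omega}{}{2}$, where each $b\in\C_\omega$ is coded by $S_b=\Set{s\in\prescript{<\omega}{}{2}}{\eq{N_s}{\M}\le b}$, so that $U_b=\bigcup_{s\in S_b}N_s$ is the regular open representative of~$b$. Identifying $\mathcal{P}(\prescript{<\omega}{}{2})$ with $\prescript{\omega}{}{2}$ via a fixed bijection places the domain of $\C_\omega$ as a subset of $\prescript{\omega}{}{2}$. All the Borelness requirements ultimately reduce to a single point: the predicate ``$N_t\subseteq\bigcup_{s\in S}N_s$'' is Borel in $(t,S)$, because by K\"onig's lemma applied to the finitely branching tree $T_{t,S}=\Set{u\in\prescript{<\omega}{}{2}}{t\subseteq u\text{ and no }s\in S\text{ lies between }t\text{ and }u}$, the predicate is equivalent to the $\Sigma^0_2$ statement that $T_{t,S}$ is finite. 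From this I will deduce that the set of admissible codes is Borel, that $\le$ and $\perp$ are Borel, and that the meet operation is Borel, since $S_{b\wedge c}=\Set{t}{N_t\subseteq U_{S_b}\cap U_{S_c}}$.

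To complete Borel-homogeneity, I need, for each $b\in\C_\omega^+$, a Borel isomorphism $\varphi_b\colon\C_\omega\to\C_\omega\rest b$. I will canonically select from $S_b$ an infinite maximal antichain of basic clopens: first take $A_b$ to be the set of $\subseteq$-minimal elements of $S_b$, which is already a disjoint cover of $U_b$ by basic clopens, then enumerate length-lexicographically the refinement $\Set{a^\frown\langle 0^k1\rangle}{a\in A_b,\,k<\omega}$ as $\Seq{s_n}{n<\omega}$. Now I define $\varphi_b$ to be the isomorphism induced by the piecewise-translation homeomorphism $h_b$ (defined off a meagre set from $\prescript{\omega}{}{2}$ onto $U_b$) which sends $\langle 0^n1\rangle^\frown r$ to $s_n^\frown r$. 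In terms of codes, $\varphi_b$ sends $S$ to $\Set{t}{N_t\subseteq\bigcup_{n<\omega}h_b[N_{\langle 0^n1\rangle}\cap U_S]}$, and since $h_b[N_{\langle 0^n1\rangle}\cap U_S]$ is explicitly the basic-clopen union $\bigcup\Set{N_{s_n^\frown r}}{\langle 0^n1\rangle^\frown r\in S}$ (with obvious adjustments when initial segments of $\langle 0^n1\rangle$ already belong to $S$), the Borelness of $\varphi_b$ reduces once again to the same predicate as in the first paragraph.

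The main obstacle I anticipate is essentially bookkeeping: staying faithful enough to the coding that Borelness at each step really does follow from the single ``$N_t\subseteq U_S$ is Borel in $(t,S)$'' observation, and verifying that $h_b$ is indeed a homeomorphism between comeagre subsets, so that the induced map on Boolean algebras modulo $\M$ is genuinely an isomorphism. The mathematical content is slim; what matters is the discipline of the coding, together with the König's-lemma trick that allows a $\Pi^1_1$-looking statement about trees to be recognised as $\Sigma^0_2$ in the binary case.
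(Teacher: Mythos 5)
Your proposal is correct and follows essentially the same route as the paper: both arguments code an element $b$ of $\C_\omega$ by the set of basic clopen neighbourhoods lying below it (your $S_b$ is precisely the characteristic function the paper calls a ``canonical code''), check that the set of admissible codes, the relations $\le$ and $\perp$, and the meet are then arithmetical in the codes, and obtain the relative isomorphisms from homogeneity of the basic clopen structure. Two differences are worth recording. First, where you funnel every Borelness claim through the single $\Sigma^0_2$ predicate ``$N_t\subseteq\bigcup_{s\in S}N_s$'' via K\"onig's lemma, the paper instead uses the canonicity (regularity) of the codes to make $\le$ and $\perp$ literally pointwise, hence closed, relations; your K\"onig's-lemma observation is still the right tool for the meet and for recognising admissible codes, so the two mechanisms are complementary rather than genuinely different. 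Second, you are more explicit than the paper about the isomorphisms $\varphi_b$: the paper appeals to homogeneity of the clopen algebra together with ``a density argument left to the reader'', whereas you build the piecewise-translation homeomorphism between comeagre sets directly, and your construction is canonical in $b$, which is a virtue when these maps are later used inside the Borel function of Theorem~\ref{theorem:diamond}. The one correction to make, which falls under the bookkeeping you already flag, is that $\bigcup_{n}h_b[N_{\langle 0^n1\rangle}\cap U_S]$ need not be regular open (it omits the countable set of branches $a^\frown 0^\omega$ for $a$ minimal in $S_b$), so the code of its class is $\Set{t}{\text{the union is dense in }N_t}$ rather than $\Set{t}{N_t\subseteq\text{the union}}$; this condition is $\Pi^0_2$ in the data, so Borelness is unaffected. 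By contrast, your formula for the meet is exactly right as written, since $U_{S_b}$ and $U_{S_c}$ are regular open and the intersection of regular open sets is regular open.
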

\begin{proof} Since the result follows from standard arguments, we only sketch the main ideas. Fix a recursive enumeration $\prescript{<\omega}{}{2}=\Set{s_n}{n<\omega}$. Let us say that $x\in\prescript{\omega}{}{2}$ is a \emph{canonical code} if:
\begin{enumerate}
\item\label{can1} for all $n,m<\omega$, if $x(n)=1$ and $s_n\subseteq s_m$, then $x(m)=1$;
\item\label{can2}for all $n<\omega$, if for every $s\in\prescript{<\omega}{}{2}$ with $s_n\subseteq s$ there exists $m<\omega$ such that $s\subseteq s_m$ and $x(m)=1$, then $x(n)$=1.
\end{enumerate}

The following claim is an immediate consequence of the definition.

\begin{claim} The set $\Set{x\in\prescript{\omega}{}{2}}{x\text{ is a canonical code}}$ is Borel.
\end{claim}

Now, for every canonical code $x$, let
\[
O_x=\bigcup\Set{N_{s_n}}{x(n)=1}
\]
be the open subset of $\prescript{\omega}{}{2}$ coded by $x$. This way, each element of $\C_\omega$ is represented by a canonical code, as in the following claim.

\begin{claim}\label{claim2} For every $X\in\mathcal{B}(\prescript{\omega}{}{2})$ there exists a canonical code $x\in\prescript{\omega}{}{2}$ such that the symmetric difference $X\bigtriangleup O_x$ is meagre.
\end{claim}
\begin{proof}[Proof of Claim~\ref{claim2}] Given $X\in\mathcal{B}(\prescript{\omega}{}{2})$, define $x\colon\omega\to 2$ by:
\[
x(n)=1\iff N_{s_n}\setminus X\text{ is meagre}.
\]
To check that $x$ is a canonical code, condition~\eqref{can1} is easy: if $N_{s_n}\setminus X$ is meagre and $s_n\subseteq s_m$, then of course $N_{s_m}\setminus X$ is meagre. Towards condition~\eqref{can2}, suppose $N_{s_n}\setminus X$ is not meagre: we shall find an extension $s\supseteq s_n$ such that for every further extension $s_m\supseteq s$ the set $N_{s_m}\setminus X$ is not meagre. By the Baire property of $N_{s_n}\setminus X$, there exists an open set $A\subseteq\prescript{\omega}{}{2}$ such that $(N_{s_n}\setminus X)\bigtriangleup A$ is meagre. Since $N_{s_n}\setminus X$ is not meagre, in particular $N_{s_n}\cap A$ cannot be empty, hence there exists $s\in\prescript{<\omega}{}{2}$ such that $s_n\subseteq s$ and $N_s\subseteq A$. It follows that $N_s\cap X$ is meagre and, therefore, for every $s_m\supseteq s$ the set $N_{s_m}\setminus X$ is not meagre, as we wanted to show.

Finally, we prove that both $O_x\setminus X$ and $X\setminus O_x$ are meagre. The first set is meagre since, by construction, it is a countable union of meagre sets. For the second, we use the Baire property of $X$ to find an open set $O\subseteq\prescript{\omega}{}{2}$ such that $X\bigtriangleup O$ is meagre. Then easily $O\subseteq O_x$, which gives that $X\setminus O_x\subseteq X\bigtriangleup O$, whence the conclusion follows.
\end{proof}

Next, we prove that the order relation on $\C_\omega$ corresponds to the pointwise order relation on canonical codes.

\begin{claim}\label{claim3} If $x$ and $y$ are canonical codes, then
\[
(\forall n<\omega)(x(n)\le y(n))\iff O_x\setminus O_y\text{ is meagre}.
\]
\end{claim}
\begin{proof}[Proof of Claim~\ref{claim3}] The left-to-right implication is clear. For the reverse implication, suppose there exists $n<\omega$ such that $x(n)=1$ but $y(n)=0$. Since $y$ is a canonical code, by condition~\eqref{can2} there exists an extension $s\supseteq s_n$ such that for every further extension $s_m\supseteq s$ we have $y(m)=0$. Combined with condition~\eqref{can1}, this implies that for all $m<\omega$, if $y(m)=1$ then $s_m$ and $s$ are incompatible. Hence $N_s\subseteq O_x\setminus O_y$, and so $O_x\setminus O_y$ is not meagre.
\end{proof}

It is also possible to obtain the incompatibility relation on $\C_\omega$, by means of the following equivalence for canonical codes $x$ and $y$:
\[
(\forall n<\omega)(x(n)=0\lor y(n)=0)\iff O_x\cap O_y\text{ is meagre}.
\]
Without extra effort, the meet operation, and in fact all Boolean operations on $\C_\omega$, can be coded as operations on canonical codes.

Finally, Borel-homogeneity follows from the observation that the Boolean algebra of clopen subsets of the Cantor space is homogeneous and, furthermore, the isomorphisms witnessing homogeneity can be taken to be Borel. By a density argument, which we leave to the reader, this extends to the whole algebra $\C_\omega$. 
\end{proof}

The main consequences for the ultrafilter number of $\C_\omega$ are summarized in the following corollary.

\begin{corollary}\leavevmode
\begin{enumerate}
\item $\cof(\M)\le\uu(\C_\omega)$;
\item $\mathbf{R}(\prescript{\omega}{}{\C_\omega}/\mathrm{Fin})$ is a Borel relational system, and $\diamondsuit(\mathbf{R}(\prescript{\omega}{}{\C_\omega}/\mathrm{Fin}))$ implies that $\uu(\C_\omega)=\aleph_1$.
\end{enumerate}
\end{corollary}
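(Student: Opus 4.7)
The plan is to derive both items by chaining together the results established earlier in the paper; no new construction is required, the content lies in verifying that $\C_\omega$ meets all the necessary hypotheses.

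For part (1), I would first invoke Corollary~\ref{corollary:reap} to compute $\rr(\prescript{\omega}{}{\C_\omega}/\mathrm{Fin})=\rr+\cof(\M)$, which in particular is bounded below by $\cof(\M)$. Next, since $\prescript{\omega}{}{\C_\omega}/\mathrm{Fin}$ is certainly infinite, Remark~\ref{remark:rruu} gives $\rr(\prescript{\omega}{}{\C_\omega}/\mathrm{Fin})\le\uu(\prescript{\omega}{}{\C_\omega}/\mathrm{Fin})$. Finally, $\C_\omega$ is a complete, atomless, c.c.c.\ Boolean algebra, so the second assertion of Proposition~\ref{proposition:ult} applies and yields $\uu(\prescript{\omega}{}{\C_\omega}/\mathrm{Fin})=\uu(\C_\omega)$. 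Putting these together produces the chain $\cof(\M)\le\rr+\cof(\M)=\rr(\prescript{\omega}{}{\C_\omega}/\mathrm{Fin})\le\uu(\prescript{\omega}{}{\C_\omega}/\mathrm{Fin})=\uu(\C_\omega)$.

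For part (2), the first step is to observe that, by Proposition~\ref{proposition:hom}, $\C_\omega$ is Borel-homogeneous; in particular it is a Borel Boolean algebra. Applying the first statement of Theorem~\ref{theorem:diamond} to $\B=\C_\omega$ gives that $\mathbf{R}(\prescript{\omega}{}{\C_\omega}/\mathrm{Fin})$ is a Borel relational system. Applying the second statement of Theorem~\ref{theorem:diamond} then yields that $\diamondsuit(\mathbf{R}(\prescript{\omega}{}{\C_\omega}/\mathrm{Fin}))$ implies $\uu(\prescript{\omega}{}{\C_\omega}/\mathrm{Fin})=\aleph_1$. To transport this conclusion to $\C_\omega$ itself, I would again invoke Proposition~\ref{proposition:ult}, using that $\C_\omega$ is complete, atomless, and c.c.c., to obtain $\uu(\C_\omega)=\uu(\prescript{\omega}{}{\C_\omega}/\mathrm{Fin})=\aleph_1$.

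Since the substantive work has been absorbed into the preceding results, there is no genuine obstacle; the only care required is in matching hypotheses, namely that $\C_\omega$ is complete, atomless, c.c.c., and Borel-homogeneous. The completeness, atomlessness, and c.c.c.\ of $\C_\omega$ are standard facts about the measure-algebra-style quotient $\mathcal{B}(\prescript{\omega}{}{2})/\M$, and Borel-homogeneity has just been established in Proposition~\ref{proposition:hom}, so the corollary will follow immediately.
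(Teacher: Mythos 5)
Your proposal is correct and follows exactly the same route as the paper: the authors' proof is a one-line citation of Corollary~\ref{corollary:reap}, Remark~\ref{remark:rruu}, and Proposition~\ref{proposition:ult} for the first item, and of Proposition~\ref{proposition:ult}, Theorem~\ref{theorem:diamond}, and Proposition~\ref{proposition:hom} for the second, which is precisely the chain of results you assemble. Your verification that $\C_\omega$ is complete, atomless, and c.c.c.\ (so that Proposition~\ref{proposition:ult} applies) is the only detail the paper leaves implicit, and you handle it correctly.
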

\begin{proof} The first point follows from Corollary~\ref{corollary:reap}, Remark~\ref{remark:rruu}, and Proposition~\ref{proposition:ult}. The second point follows from Proposition~\ref{proposition:ult}, Theorem~\ref{theorem:diamond}, and Proposition~\ref{proposition:hom}.
\end{proof}

From previous work \cite[Section 3]{MR4404621}, we already knew that $\uu\le\uu(\C_\omega)$ and that consistently $\uu(\C_\omega)<\non(\N)$. Therefore, the relation between $\uu(\C_\omega)$ and the cardinal invariants in Cicho{\'n}'s diagram is now completely determined.

\end{document}